\def\subsection{\@startsection{subsection}{2}%
	\z@{.5\linespacing\@plus.7\linespacing}{.25\linespacing}%
	{\normalfont\bfseries}}
\def\subsubsection{\@startsection{subsubsection}{3}%
	\z@{.5\linespacing\@plus.7\linespacing}{.25\linespacing}%
	{\normalfont\itshape}}
\begin{document}

\begin{abstract}
In this contribution, a new class of lattice Boltzmann schem\-es is introduced and studied. 
 These schemes are presented in a framework that generalizes the multiple relaxation times method of d'Humi\-\`eres. They extend also the Geier's cascaded method. The relaxation phase takes place in a moving frame involving a set of moments depending on a given relative velocity field. We establish with the Taylor expansion method that the equivalent partial differential equations are identical to the ones obtained with the multiple relaxation times method up to the second order accuracy. The method is then performed to derive the equivalent equations up to third order accuracy. 
\end{abstract}

\title[Lattice Boltzmann Schemes with relative velocities]{Lattice Boltzmann Schemes\\ 
with relative velocities}

\author{François Dubois}
\address[François Dubois]{CNAM Paris, Laboratoire de mécanique des structures et des systèmes couplés et Univ. Paris-Sud, Laboratoire de math\'ematiques, UMR 8628, Orsay, F-91405, CNRS, Orsay, F-91405} 
\email{Francois.Dubois@math.u-psud.fr}

\author{Tony Fevrier}
\address[Tony Fevrier]{Univ. Paris-Sud, Laboratoire de math\'ematiques, UMR 8628, Orsay, F-91405, CNRS, Orsay, F-91405}
\email[corresponding author]{Tony.Fevrier@math.u-psud.fr}

\author{Benjamin Graille}
\address[Benjamin Graille]{Univ. Paris-Sud, Laboratoire de math\'ematiques, UMR 8628, Orsay, F-91405, CNRS, Orsay, F-91405}
\email{Benjamin.Graille@math.u-psud.fr}

%\date{July 10, 2014}

\maketitle

\section*{Introduction}
\label{sec:S1}
%%%%%%%%%%%%%%%%%%%%%%%%%%%%%%%%%%

The lattice Boltzmann method (LBM) is a numerical method able to simulate many various hydrodynamic systems: fluids flows \cite{Chen:1998:0,dHu:1994:0,LalLuo:2000:0}, acoustics \cite{Ricot:2009:1}, heat equation \cite{Mezrhab:2008:0}, multiphase fluids \cite{Dellar:2002:0,Yeomans:2000:0}, Schrödinger equation  \cite{ Zhong:2006:0}, for instance. 
It was introduced to overcome some drawbacks of the ``lattice gas automata'' \cite{Frisch:1986:0,Wolfram:1986:0}.
It originates from a discretization of the Boltzmann equation \cite{Qian:1992:0} and comes in two successive phases: an exact transport step and a relaxation step.
The latter is determined by the choice of a simplified Boltzmann collision kernel. The Bhatnagar Gross Krook approach (BGK) \cite{Benzi:1992:0} and the multiple relaxation times approach (MRT) \cite{LalLuo:2000:0} are two usual choices that  lead to a diagonal relaxation phase with a fixed set of moments. 
Despite the variety of applications and domains involving the lattice Boltzmann method, some theoretical aspects remain debatable as the stability at low viscosities and the lack of Galilean invariance. 

 In 2006, Geier proposed the ``cascaded Boltzmann automata'' \cite{Geier:2006:1,Geier:2006:0}. According to \cite{Geier:2006:1}, the lack of Galilean invariance is identified as a source of instability due to the creation of a negative numerical viscosity. Note that the equilibrium distributions are directly derived from the maxwellian distributions, not exactly identical to the equilibrium presented in Qian and al \cite{Qian:1992:0}. 
 Some works deal with the cascaded approach: it is written as a MRT with a generalized equilibrium in \cite{Asi:2008:0,Premnath:2009:0}; forcing terms are included in the cascaded framework and a Chapman-Enskog expansion is presented in \cite{Premnath:2009:0} to recover the Navier-Stokes equations. 
 
The purpose of this paper is to generalize the d'Humières method and also the cascaded method by introducing moments that depend on an arbitrary velocity field. In the following, this approach refers to scheme with relative velocities. 
The collision step happens in a frame moving at a velocity $\wdu$ and not in a fixed frame. 
The velocity field is here considered in a general way: it is an arbitrary function of space and time. Taking $\utilde=0$ reduces the scheme to a classical d'Humières scheme and taking $\utilde$ as the fluid velocity yields to the cascaded Geier's scheme. This method 
provides a ``cascaded triangular'' structure during the transition between the fixed moments of the d'Humières method and the moments depending on the given velocity field.   

Formal expansions are usually performed to characterize the limit problem simulated by the lattice Boltzmann schemes with the acoustic scaling \cite{Dub:2008:0}, or the diffusive scaling \cite{Junk:2005:0}. The Taylor expansion method has been used with the acoustic scaling to derive third order equivalent equations for the general d'Humières schemes \cite{Dub:2009:0}. 
We extend this work to the schemes with relative velocities in the general non linear case. The dependence on space and time of the new velocity field is the main difficulty to circumvent. 

In the first part of this paper we recall the lattice Boltzmann framework and the associated notations. We generalize it and we introduce the schemes with relative velocities. The cascaded scheme then appears as a particular scheme with relative velocities. In the second part, we study  the  third order consistency with the Taylor expansion method: we establish that the resulting hydrodynamic is identical to the one of Lallemand and Luo \cite{LalLuo:2000:0} up to the second order accuracy. At the third order, the additional terms depend explicitly on the given velocity field.

%%%%%%%%%%%%%%%%%%%%%%%%%%%%%%%%%%
\section{Description of the scheme}
\label{sec:S2}
%%%%%%%%%%%%%%%%%%%%%%%%%%%%%%%%%%

%%%%%%%%%%%%%%%%%%%%%%%%%%%%%%%%%%
\subsection{The usual framework}
\label{sec:S21}
%%%%%%%%%%%%%%%%%%%%%%%%%%%%%%%%%%
We consider $\lattice$, a regular lattice in $d$ dimensions with a typical mesh size $\dx$. The time step $\dt$ is determined by the acoustic scaling after the specification of the velocity scale $\lambda$ by the relation:  
\begin{equation}\label{eq:lambda}
\dt=\dfrac{\dx}{\lambda}.
\end{equation}
For the scheme denoted by \ddqq, we introduce $\vectv=(\vj[0],\ldots,\vj[q-1])$ the set of the $q$ velocities and we assume that for each node $x$ of $\lattice$, and  each $\vj$ in $\vectv$, the point $x+\vj \dt$ is also a node of the lattice $\lattice$.
The aim of the \ddqq scheme is to compute a particle distribution 
$\vectf = (\fj[0],\ldots,\fj[q-1])$
on the lattice $\lattice$ at discrete values of time. It is a numerical scheme to approximate the partial differential equations
\begin{equation*}
\drondt  \fj +\vj \dscal \gradient \fj=-\sum_{k=0}^{q-1}\dfrac{1}{\tauj[jk]}(\fj[k]-\fjeq[k]),
\qquad 0\leq j\leq q-1, 
\end{equation*}
on a grid in space and time
where $\fjeq$ describes the distribution $\fj$ at the equilibrium and $[\tauj[jk]]$ is the relaxation time matrix. The scheme splits into two phases for each time iteration: first, the relaxation that is local in space, and second, the transport for which an exact characteristic method is used.

We use the MRT framework: the relaxation is written into the basis of the moments \break
$\vectm = (\mk[0],\ldots,\mk[q-1])$, 
the invertible matrix of the transformation being denoted by $\MatM$, so that $\vectm=\MatM\vectf$. Classically, we choose $\Mij[kj] = \Pk(\vj)$, where the set $(\Pk[0],\ldots,\Pk[q-1])$ is a linearly independent family of polynomials in the space 
$\R[X_1,\ldots,X_d]$ of polynomials with $d$ variables.
In this contribution, we restrict to $d+1$ conservation laws. The $d+1$ first polynoms associated with these conservation laws are chosen as $1,X_1,\ldots,X_d$. The first moments of the scheme are thus conserved, they are the density and the momentum:
\begin{equation}\label{eq::rhoq}
\rho=\sum_j \fj,\quad q^{\alpha}=\sum_j v_j^{\alpha}\fj,\quad 1\leq\alpha\leq d.
\end{equation}

\begin{remark}
The restriction to $d+1$ conservation laws is not a limitation of this methodology: it could be extended to an arbitrary number of conservation laws. In particular, the case of $d+2$ conservation laws (mass, momentum, and energy) is treated for the d'Humières scheme \cite{Dub:2012:0}. The case of one conservation law for the scheme with relative velocities has also been derived and is going to be published in a further paper.
\end{remark}

Let us now describe one time step of the scheme. The starting point is the density vector $\vectf(x,t)$ in $x\in\lattice$ at time $t$. The moments are computed by 
\begin{equation*}\label{eq:ftom}
\vectm(x,t)=\MatM\vectf(x,t).
\end{equation*}
The relaxation phase then reads
\begin{equation*}\label{eq:relaxation}
\mke(x,t) = \mk(x,t) + \sk (\mkeq(x,t)-\mk(x,t)), \qquad 0\leq k\leq q{-}1,
\end{equation*}
where $\mkeq$, $0\leq k\leq q{-}1$, is the \kieme moment at equilibrium and $\sk$, $0\leq k\leq q{-}1$, the relaxation parameter associated with the \kieme moment. The relaxation parameters of the conserved moments are equal to $0$ by definition.
The densities are then computed by 
\begin{equation*}\label{eq:mtof}
\vectfe(x,t) = \MatMinv\vectme(x,t). 
\end{equation*}
The transport phase finally reads
\begin{equation}\label{eq:transport}
 \fj(x,t+\dt) = \fje(x-\vj\dt,t), \qquad 0\leq j\leq q{-}1.
\end{equation}

%%%%%%%%%%%%%%%%%%%%%%%%%%%%%%%%%%
\subsection{Presentation of the scheme with relative velocities}
\label{sec:S22}
%%%%%%%%%%%%%%%%%%%%%%%%%%%%%%%%%%

The scheme we propose to investigate in this paper is a generalization of the usual scheme presented in section~\ref{sec:S21} 
in the spirit of a previous work of Geier \cite{Geier:2006:1,Geier:2006:0}. 
The moments are shifted with respect to a velocity field $\utilde$. This field is assumed to be a given function of space and time.
In other words, we change the matrix $\MatM$---that is a constant matrix for the d'Humières scheme---into $\MatMu$ by defining
\begin{equation}\label{eq:MatMu}
 \Miju[kj] = \Pk(\vj-\utilde), \qquad 0\leq k,j\leq q{-}1.
\end{equation}
The moments $(\mku[0],\ldots,\mku[q-1])$ are then given by
\begin{equation}\label{eq:ftomu}
\vectmu = \MatMu \; \vectf,
\end{equation}
so that $\mku$ is the \kieme moment in the frame moving at the velocity $\utilde$. In the following we call them the ``shifted moments'' in opposition to the ``fixed moments''. 
Let us note that the d'Humières scheme is enclosed in this framework by taking $\utilde=0$.

The matrix $\MatMu$ is obviously invertible for each $\utilde$. 
The vector of particle distributions is thus obtained by
\begin{equation*}\label{eq:mutof}
\vectf = \MatMinvu \; \vectmu.
\end{equation*}
The vector of distribution functions at the equilibrium $\vectfeq$ is chosen independent of the velocity field so that we have
\begin{equation*}
 \vectmequ = \MatMu \vectfeq = \MatMu \MatMinvz \vectmeqz.
\end{equation*}
The relaxation phase is diagonal in the shifted moments basis. 
\begin{equation}\label{eq:relaxationu}
\mkue = \mku{+}\sk (\mkueq{-}\mku), \qquad 0\leq k\leq q{-}1.
\end{equation}
The densities are then computed by 
\begin{equation*}\label{eq:mtof}
\vectfe = \MatMinvu\vectmue. 
\end{equation*}
The transport phase reads
\begin{equation*}
 \fj(x,t+\dt) = \fje(x-\vj\dt,t), \qquad 0\leq j\leq q{-}1.
\end{equation*}

\subsection{Two-dimensional examples: \ddqn}\label{se:example}

In this section, we present two $\ddqn$ schemes with relative velocities: the one associated with the usual orthogonal set of moments \cite{LalLuo:2000:0} and the cascaded Geier scheme \cite{Geier:2006:0}.
For the first one, the set of nine velocities is given by
\begin{equation*}
\vectv=\{(0,0),(\lambda,0),(0,\lambda),(-\lambda,0),(0,-\lambda),(\lambda,\lambda),(-\lambda,\lambda),(-\lambda,-\lambda),(\lambda,-\lambda)\},
\end{equation*}
 where $\lambda\in\R$ is the velocity scale introduced in (\ref{eq:lambda}).
The set of polynomials is the usual orthogonal set \cite{LalLuo:2000:0}:\begin{equation}\label{eq:mom}
\begin{split}
&1,X,Y,\tfrac{1}{\lambda^2}(3(X^2+Y^2)-4\lambda^2),\tfrac{1}{\lambda^2}(X^2-Y^2),\tfrac{1}{\lambda^2}XY,\\
&\tfrac{1}{\lambda^3}X(3(X^2+Y^2)-5\lambda^2),\tfrac{1}{\lambda^3}Y(3(X^2+Y^2)-5\lambda^2),\\
&\tfrac{1}{\lambda^4}(\tfrac{9}{2}(X^2+Y^2)^2-\tfrac{21\lambda^2}{2}(X^2+Y^2)+4\lambda^4).
\end{split}
\end{equation}
The associated matrix of moments $\MatM(0)$ then reads 
\begin{equation}\label{eq:matort}
\MatM(0)=
\begin{pmatrix} 1&1&1&1&1&1&1&1&1\\
0&\lambda&0&-\lambda&0&\lambda&-\lambda&-\lambda&\lambda\\
0&0&\lambda&0&-\lambda&\lambda&\lambda&-\lambda&-\lambda\\
-4&-1&-1&-1&-1&2&2&2&2\\
0&1&-1&1&-1&0&0&0&0\\
0&0&0&0&0&1&-1&1&-1\\
0&-2&0&2&0&1&-1&-1&1\\
0&0&-2&0&2&1&1&-1&-1\\
4&-2&-2&-2&-2&1&1&1&1\\
\end{pmatrix}. 
\end{equation} 
The matrix $\MatM(\wdu)$ is obtained by the multiplication of $\MatM(0)$ by a triangular matrix called the ``shifting matrix'' that is defined by: 
\begin{equation*}\label{eq:defTeq:defT}
 \MatTu = \MatMu \; \MatMinvz.
\end{equation*}
  This matrix converts the fixed moments $\vectmz$ into the shifted moments $\vectmu$.  It can be written as
\begin{equation}\label{eq:matTu}
\MatTu=\begin{pmatrix} 
1 &0   &0 &0 &0\\
A  & I_2& 0 &0 &0 \\
B_1&B_2& I_3 &0&0 \\
C_1 & C_2&C_3 &I_2 &0\\
D_1 &D_2 &D_3& D_4 & 1
\end{pmatrix}. 
\end{equation}
where the matrices $A,~B_1,~B_2,~C_1,~C_2,~C_3,~D_1,~D_2,~D_3,~D_4$ are presented in Appendix \ref{annexe5}.
 \begin{proposition}\label{th:csd2q9}
 {\rm Morphism of groups.}
For the $\ddqn$, the particular choice of polynomials given by {\rm (\ref{eq:mom})} implies that the function $\utilde\mapsto\MatTu$ is a morphism of groups from $(\R^2,+)$ into $(\R^{9\times 9},\cdot)$ so that 
\begin{equation*}\label{eq::morphism}
\MatTuv=\MatTu\MatTv=\MatTv\MatTu,\quad\forall\utilde,\vtilde\in\R^2.
\end{equation*}
\end{proposition}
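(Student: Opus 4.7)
The plan is to realize $\MatTu$ as the matrix of a translation action on a well-chosen polynomial subspace, and then to read off the morphism property from the group law of translations. I would introduce $V\subset\R[X,Y]$ as the nine-dimensional vector space spanned by $\Pk[0],\ldots,\Pk[8]$ of (\ref{eq:mom}), and, for $\utilde=(a,b)\in\R^2$, denote by $\tau_{\utilde}$ the substitution $P(X,Y)\mapsto P(X-a,Y-b)$ on $\R[X,Y]$. The operators $\tau_{\utilde}$ form a representation of the abelian group $(\R^2,+)$ on $\R[X,Y]$, satisfying $\tau_{\utilde+\vtilde}=\tau_{\utilde}\circ\tau_{\vtilde}=\tau_{\vtilde}\circ\tau_{\utilde}$ trivially.

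The crux of the argument is to show that $V$ is stable under every $\tau_{\utilde}$. First, the seven polynomials $\Pk[0],\ldots,\Pk[6]$ already span the full space of polynomials of total degree at most two, which is translation-invariant. Next, for $\Pk[7]$ and $\Pk[8]$, whose leading terms are $X(X^2+Y^2)$ and $Y(X^2+Y^2)$, I would expand $(X-a)\bigl((X-a)^2+(Y-b)^2\bigr)$ and observe that its cubic part is again exactly $X(X^2+Y^2)$ (no new cubic monomials appear), with all remaining terms of degree at most two---hence the image is in $V$; the argument for $\Pk[8]$ is symmetric. Finally for $\Pk[9]$, the delicate term $((X-a)^2+(Y-b)^2)^2$ expands as $(X^2+Y^2)^2-4a\,X(X^2+Y^2)-4b\,Y(X^2+Y^2)$ plus degree-$\leq 2$ corrections, so its image also lies in $V$.

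Once invariance of $V$ is established, the identity $\Pk(\vj-\utilde)=\sum_l[\MatTu]_{kl}\Pk[l](\vj)$ that holds on the velocity set lifts to a polynomial identity in $\R[X,Y]$, namely $\Pk(X-\utilde)=\sum_l[\MatTu]_{kl}\Pk[l](X)$. Applying $\tau_{\vtilde}$ to this identity and using it again yields
\[
\Pk(X-\utilde-\vtilde)=\sum_{l,m}[\MatTu]_{kl}[\MatTv]_{lm}\Pk[m](X),
\]
while a direct application at $\utilde+\vtilde$ reads $\Pk(X-\utilde-\vtilde)=\sum_m[\MatTuv]_{km}\Pk[m](X)$. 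Linear independence of $\Pk[0],\ldots,\Pk[8]$ as polynomials (a fortiori, since they are already independent on the nine velocity points) forces $\MatTuv=\MatTu\MatTv$, and exchanging $\utilde\leftrightarrow\vtilde$ together with the abelianness of $(\R^2,+)$ delivers the commutativity $\MatTu\MatTv=\MatTv\MatTu$.

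The main obstacle will be the stability verification for $V$: it hinges on the very specific algebraic structure of (\ref{eq:mom}). The cubic leading terms $X(X^2+Y^2)$ and $Y(X^2+Y^2)$ of $\Pk[7]$ and $\Pk[8]$ are precisely those produced by translating the quartic $(X^2+Y^2)^2$ of $\Pk[9]$; were this matching absent, $\tau_{\utilde}\Pk[9]$ would generically escape $V$ and the proposition would fail. Once stability is granted, everything else is formal and the commutativity is automatic from the abelian group law.
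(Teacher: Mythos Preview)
Your argument is sound and considerably more illuminating than the paper's own proof, which simply delegates the verification to a computer-algebra script (the Maple code in Appendix~\ref{annexe5}). Your key observation---that $\MatTu$ is precisely the matrix, in the basis $(P_0,\dots,P_8)$, of the translation operator $\tau_{\utilde}$ restricted to $V=\operatorname{span}(P_0,\dots,P_8)$---explains \emph{why} the morphism law holds and isolates the structural condition on the family (\ref{eq:mom}) that makes it work, namely translation-invariance of $V$. The paper merely checks the identity symbolically; your route makes transparent the remark following the proposition about a ``necessary and sufficient condition between the polynomials defining the moments and their derivatives.''

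One correction is needed, however: your indices are systematically off by one. In (\ref{eq:mom}) the polynomials are $P_0,\dots,P_8$; among them $P_0,\dots,P_5$ are the \emph{six} that span the six-dimensional space of total degree $\le 2$, the cubics are $P_6$ and $P_7$ (leading parts $X(X^2+Y^2)$ and $Y(X^2+Y^2)$), and the quartic is $P_8$. You refer to ``the seven polynomials $P_0,\dots,P_6$'' for the quadratic part, to $P_7,P_8$ as the cubics, and to a nonexistent $P_9$ as the quartic. Once the indices are shifted, your stability verification goes through exactly as you sketch: translating $P_6,P_7$ reproduces their own cubic leading parts plus degree-$\le 2$ remainders, and translating $(X^2+Y^2)^2$ in $P_8$ produces $-4a\,X(X^2+Y^2)-4b\,Y(X^2+Y^2)$ at degree three, absorbed by $P_6,P_7$.
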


\begin{proof}
This result is obtained thanks to a formal calculation software. The reader can check it thanks to the source code given in Appendix \ref{annexe5}.
\end{proof}

This property reflects the fact that a change of frame with a velocity $\utilde+\vtilde$ would be equivalent to two successive changes of frame with velocities $\utilde$ and $\vtilde$. It is achieved under a necessary and sufficient condition between the polynomials defining the moments and their derivatives. The $\ddqn$ set of moments particularly verifies this condition.\\

We close this section by examining the case of the cascaded $\ddqn$ Geier's scheme.
This scheme can be viewed as a particular scheme with relative velocities:
 \begin{proposition}\label{th:cascd2q9}
 {\rm The cascaded Geier's scheme is a scheme with relative velocities.}
 The relaxation of the $\ddqn$ cascaded Geier's scheme is diagonal into the basis of moments given by the polynomials
\begin{equation}\label{eq:polgeier} 
1,X,Y,X^2+Y^2,X^2-Y^2,XY,XY^2,X^2Y,X^2Y^2,
\end{equation}
shifted with respect to the fluid velocity $u=q/\rho$ introduced in {\rm (\ref{eq::rhoq})}.
\end{proposition}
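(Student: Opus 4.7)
The strategy is to unfold Geier's construction~\cite{Geier:2006:0} and match it term by term with the template of Section~\ref{sec:S22}, using the polynomials listed in (\ref{eq:polgeier}) and the shift $\utilde = u = q/\rho$. First I would recall Geier's recipe: the post-collision distribution $\vectfe$ is uniquely determined by the requirement that, for each relevant multi-index $(m,n)$ of the $\ddqn$ lattice, the central moment $\kappa_{mn} = \sum_j \fj\,(\vj^{\,x}-u_x)^m(\vj^{\,y}-u_y)^n$ be relaxed toward its maxwellian value with an independent relaxation rate $\sk$. This is precisely the content of the diagonal relaxation (\ref{eq:relaxationu}), once it is written in the basis of shifted moments built from (\ref{eq:polgeier}).

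Second, I would check that the nine polynomials (\ref{eq:polgeier}), evaluated on $\vj-\utilde$, span the same space as Geier's central monomials $(X-u_x)^m(Y-u_y)^n$ that appear in the cascaded collision, and are related to them by an invertible, $\utilde$-independent linear transformation (for instance the shifted moment associated to $X^2+Y^2$ equals $\kappa_{20}+\kappa_{02}$, the one associated to $X^2-Y^2$ equals $\kappa_{20}-\kappa_{02}$, and similarly for the other entries). Since $\MatMu$ defined in (\ref{eq:MatMu}) is invertible for every $\utilde$, this reparametrization does not alter the diagonal character of the relaxation. Combined with the choice $\utilde=u=q/\rho$ prescribed by Geier, the collision then coincides with (\ref{eq:relaxationu}).

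The real difficulty is purely notational: Geier's original presentation is organized as a \emph{cascade}, in which post-collision values of lower-order moments enter the update of higher-order moments, so one must verify that this triangular order of operations still amounts to a diagonal relaxation of the central moments themselves. This equivalence has already been made explicit in~\cite{Asi:2008:0,Premnath:2009:0}; granting it, identifying the polynomial basis (\ref{eq:polgeier}) and the shift velocity $u=q/\rho$ suffices to conclude.
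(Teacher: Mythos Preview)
Your second paragraph contains a genuine logical gap. You write that the polynomials in (\ref{eq:polgeier}) are related to Geier's central monomials $\kappa_{mn}$ by ``an invertible, $\utilde$-independent linear transformation'' and conclude that ``this reparametrization does not alter the diagonal character of the relaxation''. But an invertible change of basis does \emph{not} preserve diagonality of a linear operator unless the change of basis commutes with it. Concretely, if $\kappa_{20}$ and $\kappa_{02}$ were relaxed with independent rates $s_3\neq s_4$, then in the basis $(\kappa_{20}+\kappa_{02},\,\kappa_{20}-\kappa_{02})$ the relaxation matrix would acquire off-diagonal entries $s^-=(s_3-s_4)/2$. So your premise in the first paragraph (each $\kappa_{mn}$ relaxed diagonally) and your conclusion (diagonal in the basis (\ref{eq:polgeier})) cannot both hold with distinct rates; one of them must give.

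The paper's proof resolves exactly this point by computation. It starts from Geier's actual collision rule $\vectfe=\vectf+{}^t\MatM(0)\,\vectg$, passes to the shifted monomial basis $1,X,Y,X^2,Y^2,XY,XY^2,X^2Y,X^2Y^2$, and shows that the cascade procedure for $\vectg$ produces $\vectmueg=\vectmug+\Lambda(\vectmequg-\vectmug)$ with $\Lambda$ only \emph{block}-diagonal: the $(X^2,Y^2)$ block is the nontrivial $2{\times}2$ matrix $\begin{pmatrix} s^+ & s^-\\ s^- & s^+\end{pmatrix}$. It is the diagonalisation of this block that singles out $X^2+Y^2$ and $X^2-Y^2$ and yields the basis (\ref{eq:polgeier}). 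In other words, Geier's cascade is \emph{not} diagonal on the raw central monomials, and identifying the basis in which it \emph{is} diagonal is the actual content of the proposition. Deferring the ``cascade versus diagonal'' equivalence to \cite{Asi:2008:0,Premnath:2009:0} does not close the gap either, since those references recast the cascade as an MRT with a generalized equilibrium, not as a diagonal relaxation in the specific shifted basis (\ref{eq:polgeier}).
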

\begin{proof}
The details can be found in Appendix \ref{annexe4}.
\end{proof}
In this sense, the schemes with relative velocities are a generalization of the cascaded scheme. It is important to note that the set of moments is not identical to the usual $\ddqn$ one. Indeed, the moments of third and fourth orders are chosen differently and according to \cite{Geier:2006:0}, it may have a role on the stability of the scheme. 

%%%%%%%%%%%%%%%%%%%%%%%%%%%%%%%%%%
\section{Partial differential equivalent equations}
%%%%%%%%%%%%%%%%%%%%%%%%%%%%%%%%%%

The purpose of this paragraph is to make a consistency analysis of the scheme with relative velocities. A Taylor expansion method is performed \cite{Dub:2008:0} to derive the third order equivalent equations for the $\ddqq$ scheme with relative velocities, with any general equilibrium. We explicit a formal development of the scheme when $\dt$ and $\dx$ go to zero with $\lambda=\dx/\dt$ fixed (acoustic scaling) and with the relaxation parameters $s$ fixed. We obtain in this way a set of partial differential equations that are consistent with the scheme at different orders. In the following, we do not adapt the Boltzmann scheme to a particular partial differential equation (at first or second order) but we put in evidence the various operators hidden inside the algorithm. The reasoning consists in a formal development of the transport phase (\ref{eq:transport}) at small $\Delta t$, assuming that all particle distributions are the restrictions on the discretized space of a  sufficiently regular distribution function. The Taylor formula can thus be used as much as wanted.
In this section, we particularly prove that the second order equations are the same as the ones of the \ddqq d'Humières scheme \cite{Dub:2008:0}. In particular, for fluid problems, the viscosity appears as proportional to $(1/s_j-1/2)\, \dx$  \cite{Skordos:1993:0,Dub:2008:0,Luo:2011:0}, where $s_j$ is a relaxation parameter. In other words, the approximated physics do not depend on the velocity field $\utilde$ up to the second order. The dependence only appears at the third order on the momentum equation.
 In the following, we note $\Delta=\Delta t$, and $\fj=\fj(x,t)$. The velocity field is denoted by $\wdu=(\wdu^{\alpha})_{1\leq\alpha\leq d}$. To perform the Taylor expansion up to any order $p$, the transport phase (\ref{eq:transport})  is expanded in terms of $\Delta$, with fixed relaxation parameters:
\begin{equation}\label{eq:dvgenf}
\sum_{0\leq l\leq p} \frac{\Delta^l}{l!}\partial^l_t\fj=\sum_{|\frak{a}|\leq p} \frac{(-\Delta)^{|\frak{a}|}}{\frak{a}!}\vj^{\frak{a}}\partial_{\frak{a}}\fje+{\rm \mathcal{O}}(\Delta^{p+1}),\quad 0\leq j \leq p,
\end{equation}
where $\frak{a}=(a_1,\ldots,a_{d})\in\N^d,~ \vj^{\frak{a}}=\smash{\prod_{\alpha=1}^{d}} (\vj^{\alpha})^{a_{\alpha}},~ \partial_{\frak{a}}=\partial_{a_1}\cdots\partial_{a_{d}},~ \frak{a}!=\smash{\prod_{\alpha=1}^{d}}a_{\alpha}!$ and $|\frak{a}|=\sum_{\alpha=1}^{d} a_{\alpha}$.

The relation (\ref{eq:dvgen}) is obtained multiplying (\ref{eq:dvgenf}) by  the matrix $\Miju[kj]$ and summing on $k$. We have for $0\leq k\leq q-1$
\begin{equation}\label{eq:dvgen}
\sum_{\substack{0\leq j\leq q-1, \\0\leq l\leq p}} \frac{\Delta^l}{l!}\Miju\partial^l_t\fj=\sum_{\substack{0\leq j\leq q-1, \\|\frak{a}|\leq p}} \frac{(-\Delta)^{|\frak{a}|}}{\frak{a}!}\vj^{\frak{a}}\Miju\partial_{\frak{a}}\fje+{\rm \mathcal{O}}(\Delta^{p+1}).
\end{equation}
Taking $k=0$ yields to the conservation of the mass, $k\in\{1,\ldots,d\}$ to the conservation of the momentum and $k>d$ to the relations on the non conserved moments. In the following, a sum over a greek parameter goes from $1$ to $d$ and over a latin parameter from $0$ to $q-1$.
\subsection{Zeroth order expansion}

 The Taylor expansion method at zeroth order gives us the following proposition.
 \begin{proposition}\label{th:ordre0} {\rm The state remains close to the equilibrium.}
 \begin{equation*}
\fj=\fjeq+{\rm \mathcal{O}}(\Delta),\  \ \fje=\fjeq+{\rm \mathcal{O}}(\Delta),\quad 0\leq j\leq q-1. 
\end{equation*}   
\end{proposition}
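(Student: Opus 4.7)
The plan is to combine the $p=0$ case of the Taylor identity (\ref{eq:dvgen}) with the relaxation relation (\ref{eq:relaxationu}) and then invert $\MatMu$.

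First, I would write (\ref{eq:dvgen}) at order $p=0$, which reduces to
\begin{equation*}
\sum_j \Miju \fj \;=\; \sum_j \Miju \fje + \mathcal{O}(\Delta), \qquad 0 \le k \le q-1,
\end{equation*}
i.e.\ $\vectmu = \vectmue + \mathcal{O}(\Delta)$. Since the matrix $\MatMu$ is invertible at each $(x,t)$ and $\utilde$ is independent of $\Delta$, inversion gives $\vectf = \vectfe + \mathcal{O}(\Delta)$ as well.

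Next, I would use (\ref{eq:relaxationu}), which rearranges to $\mkue - \mku = s_k(\mkueq - \mku)$. Combined with the step above, $s_k(\mkueq - \mku) = \mathcal{O}(\Delta)$ for every $k$. For the non-conserved moments ($k>d$), the relaxation parameter $s_k$ is a fixed positive constant, hence $\mku = \mkueq + \mathcal{O}(\Delta)$. For the conserved moments ($0 \le k \le d$), conservation forces $\mku = \mkueq$ exactly, by the very choice of the first $d+1$ polynomials and the definition of the equilibrium in (\ref{eq::rhoq}). Together, $\vectmu = \vectmequ + \mathcal{O}(\Delta)$.

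Finally, applying $\MatMinvu$ converts this into $\vectf = \vectfeq + \mathcal{O}(\Delta)$, which is the first claim. The second claim follows by combining this with $\vectf = \vectfe + \mathcal{O}(\Delta)$ established at the outset. There is no real obstacle here: the only subtle point is to treat conserved and non-conserved moments separately, and to observe that $\MatMinvu$ is bounded as $\Delta \to 0$ because the matrix $\MatMu$ depends only on $\utilde$ and the lattice velocities, not on $\Delta$.
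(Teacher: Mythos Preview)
Your argument is correct. The paper does not actually prove this proposition: it omits the proof entirely, remarking only that it ``does not involve the velocity field $\utilde$'' and referring to \cite{Dub:2008:0}. Your proof is the standard one, carried out in the shifted frame; it would work equally well with $\MatMz$ in place of $\MatMu$, which is presumably what the cited reference does and why the paper says $\utilde$ plays no role. The only minor comment is that the separate treatment of conserved moments relies on $\mku=\mkueq$ for $0\le k\le d$, which is indeed exact here since $P_0=1$ and $P_\alpha=X_\alpha$ give $\mku[0]=\rho$ and $\mku[\alpha]=q^\alpha-\utilde^\alpha\rho$, both matching their equilibrium values by definition of $\vectfeq$.
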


 The proof of this proposition is omitted as it does not involve the velocity field $\utilde$: we refer to \cite{Dub:2008:0}.

%%%%%%%%%%%%%%%%%%%%%%%%%%%%%%%%%%
\subsection{First order expansion}
%%%%%%%%%%%%%%%%%%%%%%%%%%%%%%%%%%

\begin{definition}\label{th:defmflux}
The momentum flux in the direction $(\alpha, \beta)$ is defined by:
\begin{equation*}\label{eq:momflux}
\Fab=\sum_{j}^{} v_j^{\alpha}v_j^{\beta}\fjeq,\quad 1\leq\alpha,\beta\leq d.
\end{equation*}
\end{definition}

The further proposition asserts that the scheme is consistent with the Euler equations up to the first order accuracy. 

 \begin{proposition}\label{th:ordre1}{\rm First order equivalent equations.}
The mass and the momentum conservation equations read:
 \begin{gather}
\displaystyle{\partial_t \rho+\sum_{\beta} \partial_{\beta}q^{\beta}= {\rm \mathcal{O}}(\Delta),}\label{eq:masse1}\\
\displaystyle{\partial_t q^{\alpha}+\sum_{\beta}\partial_{\beta}\Fab= {\rm \mathcal{O}}(\Delta),\qquad 1\leq\alpha\leq d}\label{eq:momentum1}.
\end{gather}
\end{proposition}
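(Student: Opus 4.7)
The plan is to specialize relation (\ref{eq:dvgen}) to $p=1$, separately for $k=0$ (mass) and for $k=\alpha$ with $1\leq\alpha\leq d$ (momentum), invoking Proposition \ref{th:ordre0} to identify equilibrium moments and the definitions (\ref{eq::rhoq}) for the conserved quantities.

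For $k=0$ the polynomial $P_0=1$ gives $M_{0j}(\wdu)=1$, so $\wdu$ disappears from this row and the argument reduces to the classical one: the order $\Delta^{0}$ identity is trivial, and balancing the order $\Delta$ terms yields $\partial_t\rho=-\sum_{\beta}\partial_{\beta}(\sum_j v_j^{\beta}\fje)+{\rm \mathcal{O}}(\Delta)$. Since momentum is conserved by the relaxation, $\sum_j v_j^{\beta}\fje=q^{\beta}$ exactly, and (\ref{eq:masse1}) follows.

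For $k=\alpha$ the relevant polynomial is $P_\alpha(X)=X_\alpha$, so $M_{\alpha j}(\wdu)=v_j^{\alpha}-\wdu^{\alpha}$. At order $\Delta^{0}$ the two sides agree automatically, being both $q^{\alpha}-\wdu^{\alpha}\rho$ by conservation of density and momentum. At order $\Delta$, observing that $\wdu^{\alpha}$ is a scalar factor that the $j$-sums and the Taylor derivatives (which act on $\fj$ and $\fje$ only, not on $\wdu$) see as a constant, I obtain
\begin{equation*}
\partial_t q^{\alpha}-\wdu^{\alpha}\partial_t\rho=-\sum_{\beta}\partial_{\beta}\Bigl(\sum_j v_j^{\alpha}v_j^{\beta}\fje\Bigr)+\wdu^{\alpha}\sum_{\beta}\partial_{\beta}q^{\beta}+{\rm \mathcal{O}}(\Delta).
\end{equation*}
Proposition \ref{th:ordre0} and Definition \ref{th:defmflux} give $\sum_j v_j^{\alpha}v_j^{\beta}\fje=\Fab+{\rm \mathcal{O}}(\Delta)$; rearranging produces
\begin{equation*}
\partial_t q^{\alpha}+\sum_{\beta}\partial_{\beta}\Fab=\wdu^{\alpha}\Bigl(\partial_t\rho+\sum_{\beta}\partial_{\beta}q^{\beta}\Bigr)+{\rm \mathcal{O}}(\Delta),
\end{equation*}
and the mass equation (\ref{eq:masse1}) already proved collapses the bracket to ${\rm \mathcal{O}}(\Delta)$, delivering (\ref{eq:momentum1}).

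The only subtlety is the bookkeeping of the space-time dependence of $\wdu$: the Taylor derivatives in (\ref{eq:dvgen}) act solely on the particle distributions, so $\wdu^{\alpha}$ may be factored in and out of $j$-sums but cannot be moved under $\partial_t$ or $\partial_{\beta}$. Consequently the $\wdu$-dependent pieces do not cancel algebraically; they assemble into the same divergence pattern as in the d'Humi\`eres case and are eliminated only a posteriori by the first-order mass conservation, which is precisely what makes the first-order equivalent system independent of the chosen relative velocity.
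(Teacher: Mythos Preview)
Your proof is correct and follows essentially the same route as the paper: specialize the expansion (\ref{eq:dvgen}) to the rows $k=0$ and $k=\alpha$, use that $M_{0j}(\wdu)=1$ and $M_{\alpha j}(\wdu)=v_j^{\alpha}-\wdu^{\alpha}$, split the $\wdu^{\alpha}$-contribution off, and absorb it via the already-established mass equation. The only cosmetic difference is that the paper first invokes Proposition~\ref{th:ordre0} to replace $\fj,\fje$ by $\fjeq$ in the $\Delta$-terms (obtaining (\ref{eq:dvlpt1})), whereas you keep $\fje$ and use exact conservation of $\rho$ and $q^{\beta}$ before applying Proposition~\ref{th:ordre0} to the second-order flux; both lead to the same identity, and your remark on why $\wdu^{\alpha}$ stays outside the derivatives is exactly the point the paper leaves implicit.
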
	

\begin{proof}
The expansion (\ref{eq:dvgen}) up to the second order yields:
\begin{equation}\label{eq:dvlpt1}
\mku+\smash{\Delta\sum_j} \Miju[kj]~\partial_t \fjeq=
\mkue-\Delta\sum_{\beta, j} \Miju[kj]~v_j^\beta~ \partial_{\beta}\fjeq+{\rm \mathcal{O}}(\Delta^2). 
\end{equation}

 \textbf{Case 1: $k=0$.} 
The coefficient $\Miju[0j]$ does not depend on $\widetilde{u}$ and the density $\rho$ is conserved by the collision step. Thus (\ref{eq:dvlpt1}) becomes
\begin{equation*}
\displaystyle{\rho+\Delta~ \partial_t \rho=\rho-\Delta\sum_{\beta, j}  v_j^\beta~ \partial_{\beta}\fjeq+{\rm \mathcal{O}}(\Delta^2)}, 
\end{equation*} 
and the mass equation (\ref{eq:masse1}) follows as $v_j$ does not depend on space and time.\\

 \textbf{Case 2: $1\leq k=\alpha\leq d$.}
The momentum $q$ is also conserved. Replacing $k$ by $\alpha$ in (\ref{eq:dvlpt1}) then gives
\begin{equation*}
\displaystyle{\sum_j (v_j^{\alpha}-\widetilde{u}^{\alpha})~\partial_t \fjeq+\sum_{\beta, j}(v_j^{\alpha}-\widetilde{u}^{\alpha})~v_j^\beta~ \partial_{\beta}\fjeq={\rm \mathcal{O}}(\Delta)}, 
\end{equation*}
 and separating the terms corresponding to $v_j^{\alpha}$ and $\wdua$ yields to:
\begin{equation*}
\displaystyle{\partial_t q^{\alpha}+\sum_{\beta, j}\partial_{\beta}(v_j^{\alpha}~v_j^\beta~ \fjeq)-\wdu^{\alpha}(\partial_t \rho+\sum_{\beta} \partial_{\beta}q^{\beta})={\rm \mathcal{O}}(\Delta)}. 
\end{equation*}
The contribution of the velocity field $\utilde$ only involves the first order mass equation (\ref{eq:masse1}) and can be included in the rest ${\rm \mathcal{O}}(\Delta)$.
\end{proof}

We give two definitions useful for the next orders: the particular derivative that is the derivative along a given velocity---this operator naturally  appears several times in the following expansion---and the conservation default that plays an essential role in some transition lemmas. These lemmas are the keys to increase the order in the Taylor expansion.
 
\begin{definition}\label{th:defderdir}
Let $\operatorname{d}_t^j$  be the particular derivative operator defined by \begin{equation}\label{eq:partd}
 \operatorname{d}_t^j=\partial_t+\sum_{\alpha}v_j^{\alpha}\partial_{\alpha},\quad 0\leq j\leq q-1.
 \end{equation}
\end{definition}

\begin{definition}\label{th:defcons}
The conservation default $\thk$ is defined by:
\begin{equation}\label{eq:defcons}
\displaystyle{\thk=\sum_{j}\Miju[kj]~\operatorname{d}_t^j  \fjeq},\quad 0\leq k\leq q-1.
\end{equation}
\end{definition}
The following lemma checks that the conserved moments have no conservation default up to the first order accuracy.
\begin{lemme}\label{th:conscons}
The conservation defaults for the conserved moments read
\begin{equation*}\label{eq:conscons}
\thk={\rm \mathcal{O}}(\Delta), \qquad 0\leq k\leq d.
\end{equation*}
\end{lemme}

\begin{proof}
Actually this proposition is an other way to write the first order equivalent equations. We verify it with the following calculation using (\ref{eq:defcons}).
\begin{align*}
\thk[0]&= \sum_{j}\operatorname{d}_t^j  \fjeq,\\
\thk[\alpha]&=  \sum_{j}v_j^{\alpha}~\operatorname{d}_t^j  \fjeq-\wdu^{\alpha}\sum_{j}\operatorname{d}_t^j  \fjeq,\qquad 1\leq\alpha\leq d.
\end{align*}
Those two quantities are exactly the equation on mass for the first one,
\begin{equation*}
\sum_{j}^{} \operatorname{d}_t^j\fjeq 
=\partial_t\rho+{\rm div}(q),
\end{equation*}
and the equation on momentum for the second one,
\begin{equation*}
\sum_{j}^{} v_j^{\alpha}~\operatorname{d}_t^j\fjeq 
 =\partial_t q^{\alpha}+\sum_{\beta}\partial_{\beta}\Fab.
\end{equation*}
The proof is closed thanks to Prop.~\ref{th:ordre1}.
\end{proof}
The non conserved moments must also be expanded to derive the second order equivalent equations.
The conservation default presented in Def.~\ref{th:defcons} appears in their expansion. 
 
 \begin{proposition}\label{th:transition1}{\rm Second order transition lemma.}
The non conserved moments before and after the collision read
\begin{align*}
\mku& =\mkueq-\frac{\Delta}{s_k}\thk+{\rm \mathcal{O}}(\Delta^2),\quad & d<k\leq q-1,\\
\mkue& =\mkueq+\Big(1-\frac{1}{s_k}\Big)\Delta~\thk+{\rm \mathcal{O}}(\Delta^2),\quad & d< k\leq q-1.
\end{align*}
\end{proposition}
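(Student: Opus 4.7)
The plan is to combine the two-term Taylor expansion (\ref{eq:dvlpt1}) with the relaxation rule (\ref{eq:relaxationu}), observing that the sum of the time and space derivatives that appear in (\ref{eq:dvlpt1}) reassembles exactly into the particular derivative $\operatorname{d}_t^j$ from Def.~\ref{th:defderdir}, and hence into the conservation default $\theta_k$ from Def.~\ref{th:defcons}.

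First I would rearrange (\ref{eq:dvlpt1}) as
\begin{equation*}
m_k^{\tilde u,*}-m_k^{\tilde u}=\Delta\sum_j M_{kj}^{\tilde u}\Big(\partial_t+\sum_\beta v_j^\beta\partial_\beta\Big)f_j^{eq}+{\rm \mathcal O}(\Delta^2)=\Delta\,\theta_k+{\rm \mathcal O}(\Delta^2),
\end{equation*}
which identifies the jump across the collision with $\Delta\,\theta_k$ at leading order. This step is purely algebraic once one recognizes $\operatorname{d}_t^j$ in the parenthesis; note that using $f_j^{eq}$ instead of $f_j$ inside $\theta_k$ is legitimate because of Prop.~\ref{th:ordre0}, which ensures $f_j=f_j^{eq}+{\rm \mathcal O}(\Delta)$ and the derivatives it carries enter only at one higher order.

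Second, for a non-conserved index $d<k\leq q-1$ the relaxation relation (\ref{eq:relaxationu}) reads
$m_k^{\tilde u,*}-m_k^{\tilde u}=s_k(m_k^{\tilde u,eq}-m_k^{\tilde u})$, so substituting the preceding identity and dividing by $s_k\neq 0$ gives
\begin{equation*}
m_k^{\tilde u,eq}-m_k^{\tilde u}=\frac{\Delta}{s_k}\,\theta_k+{\rm \mathcal O}(\Delta^2),
\end{equation*}
which is the first announced formula. The second formula follows immediately by adding $\Delta\theta_k+{\rm \mathcal O}(\Delta^2)$ back to $m_k^{\tilde u}$, yielding $m_k^{\tilde u,*}=m_k^{\tilde u,eq}+(1-1/s_k)\Delta\,\theta_k+{\rm \mathcal O}(\Delta^2)$.

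The calculation is essentially one of unfolding definitions, so there is no serious obstacle; the only point requiring care is to verify that the ${\rm \mathcal O}(\Delta^2)$ remainder is preserved through the division by $s_k$, which is harmless since $s_k$ is a fixed relaxation parameter independent of $\Delta$. The fact that the velocity field $\tilde u$ does not create any extra low-order term here is also worth noting: $\tilde u$ is hidden inside $M_{kj}^{\tilde u}$ and $\theta_k$, but no time or space derivative of $\tilde u$ is produced at this order because the Taylor expansion only acts on $f_j^{eq}$.
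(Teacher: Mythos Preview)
Your argument is correct and follows essentially the same route as the paper: rearrange the second-order expansion (\ref{eq:dvlpt1}) to identify $\mkue-\mku=\Delta\,\theta_k+{\rm \mathcal O}(\Delta^2)$, then combine with the relaxation relation (\ref{eq:relaxationu}) and divide by $s_k$ to obtain the pre-collision formula, from which the post-collision one follows immediately. The only additions you make beyond the paper's own proof are the explicit justification via Prop.~\ref{th:ordre0} for replacing $f_j$ by $f_j^{eq}$ in the first-order derivative terms and the remark that dividing by the fixed $s_k$ preserves the remainder, both of which are welcome clarifications.
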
	

This proposition is also verified for $k\leq d$. In this case, it only hides the first order expansion developed in Prop.~\ref{th:ordre1}.

\begin{proof}
The second order Taylor expansion of the shifted moments (\ref{eq:dvlpt1}) yields: 
\begin{equation*}
\displaystyle{\mku-\mkue=-\Delta\Big( \sum_j \Miju[kj]~\partial_t \fjeq+\sum_{\beta, j} \Miju[kj]~v_j^\beta~ \partial_{\beta}\fjeq\Big)+\mathcal{O}(\Delta^2)}.
\end{equation*}
 Combining this identity with the relaxation phase (\ref{eq:relaxationu}) gives
 \begin{equation*}
 \displaystyle{\mku=\mkueq-\frac{\Delta}{s_k}\thk+\mathcal{O}(\Delta^2)}.
\end{equation*}
 Replacing in (\ref{eq:relaxationu}) the post collision moments read:
\begin{equation*}
\displaystyle{\mkue=\mkueq+\Big(1-\frac{1}{s_k}\Big)\Delta~\thk+\mathcal{O}(\Delta^2)}. \qedhere
\end{equation*}
\end{proof}

%%%%%%%%%%%%%%%%%%%%%%%%%%%%%%%%%%
\subsection{Second order expansion}
%%%%%%%%%%%%%%%%%%%%%%%%%%%%%%%%%%

We first define the momentum velocity tensor that appears associated with the conservation default in the second order equivalent equations.
 \begin{definition}\label{th:tensor}
The momentum velocity tensor is defined by:
\begin{equation}\label{eq:tensor}
\Lau{kp}{l}=\sum_{j=0}^{q-1}\Mijut[kj]\Mijut[pj]\Mijinvu[jl],\qquad 0\leq k,l,p \leq q-1,
\end{equation}
 with \begin{equation*}
\Mijut[kj]=\left\{\begin{split}
&\vj^{\alpha}\qquad \hspace{0.8cm} i\!f~  1\leq k=\alpha\leq d,  \\
&\Miju[kj]\qquad else.
\end{split}\right.
\end{equation*}
\end{definition}	

The following proposition presents the expected expansion. The mass conservation is valid up to the second order accuracy. The momentum equation contains a first order term that depends on the conservation defaults, the momentum velocity tensors and the set of relaxation parameters $s$.

 \begin{proposition}\label{th:ordre2}{\rm Second order equivalent equations.}
The mass and the momentum conservation equations read:
 \begin{gather}
\partial_t \rho+\sum_{\beta} \partial_{\beta}q^{\beta}= {\rm \mathcal{O}}(\Delta^2),\label{eq:masse2}\\
\partial_t q^{\alpha}+\sum_{\beta}\partial_{\beta}\Fab-\Delta\Big(\sum_{\beta,l>d}\!\sigma_l\partial_\beta(\Lau{\alpha\beta}{l} \thk[l])\Big)= {\rm \mathcal{O}}(\Delta^2),\quad 1\leq\alpha\leq d.\label{eq:momentum2}
 \end{gather}
\end{proposition}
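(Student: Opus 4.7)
The plan is to expand identity~(\ref{eq:dvgen}) up to order $p=3$ in $\Delta$ for $k=0$ (mass) and $k=\alpha$ (momentum), and then systematically invoke Propositions~\ref{th:ordre1} and~\ref{th:transition1} to reduce the result. For the mass equation, the entry $\Miju[0j]=1$ is independent of $\wdu$ and the computation matches the classical d'Humi\`eres case: conservation of $\rho$ and $q^\beta$ through the collision kills the $O(1)$ terms, while at order $\Delta^2$ one picks up $\tfrac{\Delta}{2}\partial_t^2\rho$ from the left-hand side and $\tfrac{\Delta}{2}\sum_{\beta,\gamma}\partial_{\beta\gamma}\sum_j v_j^\beta v_j^\gamma\fjeq$ from the right-hand side (after using Proposition~\ref{th:ordre0} to write $\fje=\fjeq+O(\Delta)$). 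Differentiating~(\ref{eq:masse1}) and using~(\ref{eq:momentum1}) gives $\partial_t^2\rho=\sum_{\beta,\gamma}\partial_{\beta\gamma}\sum_j v_j^\beta v_j^\gamma\fjeq+O(\Delta)$, so the two second-order contributions cancel and~(\ref{eq:masse2}) follows.

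For the momentum equation I expand~(\ref{eq:dvgen}) at $k=\alpha$ to $p=3$; conservation of $m^u_\alpha$ kills the $O(1)$ term, and I split $\Miju[\alpha j]=v_j^\alpha-\wdu^\alpha$. The piece proportional to $\wdu^\alpha$ factorises out exactly the truncated left-hand side of the mass identity to order $\Delta^2$, hence is $O(\Delta^2)$ by the mass result just established. This is the main obstacle: since $\wdu$ depends on space and time, the factorisation has to be performed before any derivative is distributed onto the $\Miju[\alpha j]$ factor, and the argument genuinely needs the mass equation at second order (rather than merely the first order version used in Proposition~\ref{th:ordre1}) already in hand. What remains involves only the constants $v_j^\alpha$ and, after pulling them through the derivatives, reduces to
\[
\partial_t q^\alpha + \sum_\beta \partial_\beta\sum_j v_j^\alpha v_j^\beta \fje + \tfrac{\Delta}{2}\partial_t^2 q^\alpha - \tfrac{\Delta}{2}\sum_{\beta,\gamma}\partial_{\beta\gamma}\sum_j v_j^\alpha v_j^\beta v_j^\gamma \fjeq = O(\Delta^2).
\]

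To finish, I use Proposition~\ref{th:transition1} in the form $\fje=\fjeq+\Delta\sum_{l>d}(1-1/s_l)(\MatMinvu)_{jl}\thk[l]+O(\Delta^2)$; since for $\alpha,\beta\leq d$ the tilde in definition~(\ref{eq:tensor}) reduces to $v_j^\alpha,v_j^\beta$, this yields $\sum_j v_j^\alpha v_j^\beta \fje = \sum_j v_j^\alpha v_j^\beta \fjeq + \Delta\sum_{l>d}(1-1/s_l)\Lau{\alpha\beta}{l}\thk[l]+O(\Delta^2)$. For the $\partial_t^2 q^\alpha$ term, I invoke the dual identity $v_j^\alpha v_j^\beta=\sum_l\Lau{\alpha\beta}{l}\Miju[lj]$ (obtained from~(\ref{eq:tensor}) by right-multiplication by $\MatMu$); combined with Definitions~\ref{th:defderdir} and~\ref{th:defcons}, it gives
\[
\partial_t\sum_j v_j^\alpha v_j^\beta \fjeq + \sum_\gamma\partial_\gamma\sum_j v_j^\alpha v_j^\beta v_j^\gamma \fjeq = \sum_l\Lau{\alpha\beta}{l}\thk[l],
\]
so that using~(\ref{eq:momentum1}) to replace $\partial_t^2 q^\alpha$ makes the triple-moment contributions cancel exactly, and the two surviving $\thk[l]$ terms combine with net coefficient $(1-1/s_l)-\tfrac12=-\sigma_l$ (with $\sigma_l=1/s_l-\tfrac12$). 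Lemma~\ref{th:conscons} then absorbs the $l\leq d$ part of the sum into the $O(\Delta^2)$ remainder, delivering precisely~(\ref{eq:momentum2}).
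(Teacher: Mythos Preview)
Your argument is correct and follows essentially the same route as the paper's proof in Appendix~\ref{annexe1}: expand~(\ref{eq:dvgen}) to third order, dispose of the $\wdu^\alpha$ part of $\Miju[\alpha j]$ via the just-established mass identity, inject Proposition~\ref{th:transition1} into the $\fje$ term, and cancel the wave-equation-type contributions using Proposition~\ref{th:ordre1} and Lemma~\ref{th:conscons}. The only cosmetic difference is that you invoke the tensor $\Lau{\alpha\beta}{l}$ and its dual identity $v_j^\alpha v_j^\beta=\sum_l\Lau{\alpha\beta}{l}\Miju[lj]$ from the outset, whereas the paper carries the explicit $\sum_j v_j^\alpha v_j^\beta\Mijinvu[jl]$ sums a bit longer and collapses them via ``matricial rearrangements''; the underlying computation is identical.
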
	

 The proof follows the same steps as the first order case given by Prop.~\ref{th:ordre1} and can be found in Appendix \ref{annexe1}.\\

We now prove that the second order equivalent equations of the $\ddqq$ schemes with relative velocites are independent of the velocity field $\utilde$. Two propositions are required.
 The first one proves that the conservation defaults of the second order moments do not depend on $\utilde$ up to the first order accuracy. The second asserts that the momentum velocity tensors associated with the third and fourth order moments vanish.
 In other words, the only conservation defaults appearing in (\ref{eq:momentum2}) are those associated with the second order moments.The combination of those two propositions gives the expected result.

\begin{proposition}\label{th:conseq}{\rm First order expansion of some conservation defaults.}\newline
For all k  corresponding to a second order moment, ie degree of $P_{k}$ equal to $2$,
 \begin{equation*}\label{eq:conseq}
  \thk=\thz+ {\rm \mathcal{O}}(\Delta). 
  \end{equation*}
\end{proposition}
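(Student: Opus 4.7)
I would isolate the $\utilde$-dependence in $\thk = \sum_j P_k(\vj - \utilde)\,\operatorname{d}_t^j \fjeq$ by Taylor-expanding the polynomial $P_k$ around $\vj$ with increment $-\utilde$. Since $P_k$ has degree exactly two, this expansion terminates after three terms:
\[
P_k(\vj - \utilde) \;=\; P_k(\vj) \;-\; \sum_\alpha \utilde^\alpha (\partial_\alpha P_k)(\vj) \;+\; \tfrac{1}{2}\sum_{\alpha,\beta}\utilde^\alpha \utilde^\beta (\partial^2_{\alpha\beta} P_k).
\]
The crucial observation is that $\partial_\alpha P_k$ has degree at most one while $\partial^2_{\alpha\beta} P_k$ is a constant, so every $\vj$-dependent correction factor lies in the span of $\{1, X_1, \ldots, X_d\}$, the polynomial family defining the conserved moments.

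\textbf{Main step.} Multiplying the expansion by $\operatorname{d}_t^j \fjeq$ and summing over $j$, and noting that setting $\utilde = 0$ in the definition of $\thk$ yields exactly $\thz$, I would arrive at
\[
\thk - \thz \;=\; -\sum_\alpha \utilde^\alpha \sum_j (\partial_\alpha P_k)(\vj)\,\operatorname{d}_t^j \fjeq \;+\; \tfrac{1}{2}\sum_{\alpha,\beta}\utilde^\alpha \utilde^\beta (\partial^2_{\alpha\beta} P_k)\,\thk[0].
\]
Each remaining inner sum $\sum_j Q(\vj)\,\operatorname{d}_t^j \fjeq$ with $\deg Q \leq 1$ can be expanded in the basis $\{1, X_1, \ldots, X_d\}$ and then reexpressed in terms of the shifted conservation defaults $\thk[0], \ldots, \thk[d]$, using the identity $\sum_j v_j^\gamma \operatorname{d}_t^j \fjeq = \thk[\gamma] + \utilde^\gamma \thk[0]$ already exploited in the proof of Lemma~\ref{th:conscons}.

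\textbf{Conclusion and obstacle.} Invoking Lemma~\ref{th:conscons}, all conservation defaults $\thk[l]$ with $l \leq d$ are $\mathcal{O}(\Delta)$; every correction on the right-hand side is then $\mathcal{O}(\Delta)$, and we conclude $\thk = \thz + \mathcal{O}(\Delta)$. The only genuine subtlety is the careful bookkeeping needed to reexpress the unshifted sums $\sum_j v_j^\gamma \operatorname{d}_t^j \fjeq$ in terms of the shifted defaults without losing an $\mathcal{O}(1)$ contribution. The hypothesis $\deg P_k = 2$ is sharp: for a cubic moment $\partial_\alpha P_k$ would be quadratic and the analogous argument would involve nontrivial second-order conservation defaults, which is precisely the mechanism that will produce the $\utilde$-dependent corrections in the forthcoming third-order momentum equation.
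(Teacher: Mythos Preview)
Your proposal is correct and follows essentially the same approach as the paper: Taylor-expand $P_k(\vj-\utilde)$ exactly (degree two terminates the expansion), observe that the derivative corrections lie in the span of $\{1,X_1,\ldots,X_d\}$, and invoke Lemma~\ref{th:conscons} to absorb the resulting conserved-moment defaults into $\mathcal{O}(\Delta)$. The only cosmetic difference is that the paper recognises $\sum_j v_j^\gamma\,\operatorname{d}_t^j\fjeq$ directly as the \emph{unshifted} default $\thz[\gamma]$ rather than passing through the shifted $\thk[\gamma]+\utilde^\gamma\thk[0]$; both routes are equivalent since Lemma~\ref{th:conscons} covers either version.
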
	

\begin{proof}
We choose $k$ corresponding to a second order moment.
The coefficients $\Miju[kj]$ defined by (\ref{eq:MatMu}) are expanded using the exact second order Taylor formula on their associated polynom $\Pk$. 
\begin{equation}\label{eq:pk}
\displaystyle{\Pk(\textbf{X}-\widetilde{u})=\Pk(\textbf{X})-\sum_{\beta=1}^{d}\utilde^{\beta}~\partial_{\beta}\Pk(\textbf{X})+\frac{1}{2}}\sum_{\beta,\gamma=1}^{d}\utilde^{\beta}~\utilde^{\gamma}~\partial^2_{\beta\gamma}\Pk(\textbf{X}).
\end{equation} 
Its derivatives can be also expanded in this way:
\begin{equation}\label{eq:der}
\partial_{\beta}\Pk(\textbf{X})=a^{\beta 0}+\sum_{\gamma=1}^{d}a^{\beta\gamma}X_{\gamma}~ {\rm and}~
\partial_{\beta\gamma}^2\Pk(\textbf{X})=b^{\beta\gamma},
\end{equation} 
for any sets of real constants $\dsp{(a^{\beta\gamma})_{1\leq\beta,\gamma\leq d}}$ and $\dsp{(b^{\beta\gamma})_{1\leq\beta,\gamma\leq d}}$. 
We apply (\ref{eq:pk}) and (\ref{eq:der}) to the velocity $\vj$ to get the expression of $\Miju[kj]$:
\begin{equation*}
\Miju[kj]=\Mijz[kj]-a^{\beta0}\sum_{\beta=1}^d \utilde^{\beta}-\sum_{\beta,\gamma=1}^{d}\utilde^{\beta}~a^{\beta\gamma}~v_j^{\gamma}+\frac{1}{2}
\sum_{\beta,\gamma=1}^{d}\utilde^{\beta}~\utilde^{\gamma}~b^{\beta\gamma}.
\end{equation*}
 Multiplying by $\operatorname{d}_t^j \fjeq$ and summing over $j$, the conservation default yields: 
\begin{equation*}
\begin{split}
\thk
=&~ \thz{-}\Big(a^{\beta0}\sum_{\beta=1}^d \utilde^{\beta}-\frac{1}{2}\sum_{\beta,\gamma=1}^{d}\utilde^{\beta}~\utilde^{\gamma}~b^{\beta\gamma}\Big)\thz[0]-\sum_{\beta,\gamma=1}^{d}\utilde^{\beta}~a^{\beta\gamma}~\thz[\gamma].
\end{split}
\end{equation*}
 The result is given by the lemma \ref{th:conscons} because the two last terms are a linear combination of the conservation defaults of the conserved moments.
\end{proof}

\begin{proposition}\label{th:tensoreq} The first components of the moment velocity tensor do not depend on $\widetilde{u}$.
\begin{equation*}\label{eq:tensoreq}
 \Lau{\alpha\beta}{l}=\Lauz{\alpha\beta}{l},\quad 1\leq\alpha,\beta\leq d,\quad  l\notin\{0,\alpha,\beta\}.
\end{equation*}
\end{proposition}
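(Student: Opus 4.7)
The plan is to exploit the decomposition
\begin{equation*}
v_j^{\alpha} v_j^{\beta} = (v_j^{\alpha}-\utilde^{\alpha})(v_j^{\beta}-\utilde^{\beta}) + \utilde^{\alpha} P_{\beta}(\vj-\utilde) + \utilde^{\beta} P_{\alpha}(\vj-\utilde) + \utilde^{\alpha}\utilde^{\beta} P_{0}(\vj-\utilde),
\end{equation*}
made possible by the specific choice $P_0=1,\, P_1=X_1,\ldots,P_d=X_d$ of the conserved moments. Multiplying by $\Mijinvu[jl]$, summing over $j$, and using the orthogonality relation $\sum_j\Miju[kj]\Mijinvu[jl]=\delta_{kl}$ immediately turns the last three contributions into $\utilde^{\alpha}\delta_{\beta l}+\utilde^{\beta}\delta_{\alpha l}+\utilde^{\alpha}\utilde^{\beta}\delta_{0l}$, all of which vanish as soon as $l\notin\{0,\alpha,\beta\}$. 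The proposition is thus reduced to the identity
\begin{equation*}
\sum_j (v_j^{\alpha}-\utilde^{\alpha})(v_j^{\beta}-\utilde^{\beta})\,\Mijinvu[jl]=\Lauz{\alpha\beta}{l}.
\end{equation*}

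To close the argument I would write $X_{\alpha}X_{\beta}$ as a genuine polynomial combination $X_{\alpha}X_{\beta}=\sum_n A_n P_n(X)$ in $\R[X_1,\ldots,X_d]$, and substitute $X=\vj-\utilde$ to obtain $(v_j^{\alpha}-\utilde^{\alpha})(v_j^{\beta}-\utilde^{\beta})=\sum_n A_n\Miju[nj]$. The orthogonality relation, applied once more, then yields the coefficient $A_l$. Running exactly the same computation at $\utilde=0$ identifies $A_l$ with $\Lauz{\alpha\beta}{l}$, which finishes the proof.

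The sensitive ingredient is the polynomial-ring identity $X_{\alpha}X_{\beta}=\sum_n A_n P_n(X)$: it is precisely what guarantees that the expansion coefficients are truly independent of $\utilde$, as opposed to the expansion coefficients of a fixed function on a $\utilde$-dependent basis of the finite lattice. For the orthogonal $\ddqn$ basis~(\ref{eq:mom}), the six polynomials of degree at most two exhaust the six-dimensional space of polynomials of degree $\leq 2$ in two variables, so this identity holds automatically. In a more general setting where $X_{\alpha}X_{\beta}$ would not lie in the polynomial span of $\{P_n\}$, one would instead have to exploit the block-triangular structure of the shifting matrix $\MatTu$ through the identity $\Lau{\alpha\beta}{l}=\sum_m\Lauz{\alpha\beta}{m}((\MatTu)^{-1})_{ml}$ and analyze each degree block separately.
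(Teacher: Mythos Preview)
Your proof is correct and follows essentially the same route as the paper: both arguments hinge on writing $X_{\alpha}X_{\beta}=\sum_{n}A_{n}P_{n}(X)$ as an identity in the polynomial ring $\R[X_{1},\ldots,X_{d}]$, evaluating it at $X=\vj-\utilde$, and using the inversion relation $\sum_{j}\Miju[kj]\Mijinvu[jl]=\delta_{kl}$ to read off the tensor components. Your preliminary splitting of $v_{j}^{\alpha}v_{j}^{\beta}$ into the shifted product plus the three $P_{0},P_{\alpha},P_{\beta}$ pieces is just a reorganisation of the same computation the paper carries out in one step, and your final identification $A_{l}=\Lauz{\alpha\beta}{l}$ via the $\utilde=0$ specialisation is exactly what the paper does implicitly. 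If anything, you are more careful than the paper in flagging that the argument requires $X_{\alpha}X_{\beta}$ to lie in the genuine polynomial span of the $P_{n}$ (and not merely in their span as functions on the velocity set); the paper assumes this without comment.
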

 
\begin{proof}

We compose the momentum velocity tensor (\ref{eq:tensor}) by $\Miju[lp]$ for any $0\leq p\leq q-1$ and sum over $l$:
\begin{equation}\label{eq:combi}
\sum_{l}\Lau{\alpha\beta}{l}~\Miju[lp]=\vj[p]^{\alpha}\vj[p]^{\beta}.
\end{equation}
 The equation (\ref{eq:combi})  is actually a polynomial identity evaluated on the velocity set: indeed the quantity $v_p^{\alpha}v_p^{\beta}$ is associated with the polynomial $ X_{\alpha}X_{\beta}$ and $\Miju[lp]$ with $P_l$. It induces us to write $ X_{\alpha}X_{\beta}$ into the basis of $\R_2[X_1,\ldots,X_d]$---the space of polynoms with $d$ variables of degree inferior to $2$---chosen to build the \ddqq scheme:
\begin{equation*}
\exists(\apol)\in\R^{qd^2},\quad\displaystyle{X_{\alpha}X_{\beta}=\sum_{l}\apol P_l}.
\end{equation*}
We note that $\apol=0 ~{\rm if~ deg}(P_l)>2$.
We apply this relation to $X=v_p-\widetilde{u}$ and 
expand it using the definition of the matrix of moments (\ref{eq:MatMu}):
\begin{equation*}
\displaystyle{v_p^{\alpha}v_p^{\beta}=(a^{\alpha\beta}_0+\utilde^{\alpha}\utilde^{\beta})\Miju[0p]+\utilde^{\alpha}\Miju[\beta p]+\utilde^{\beta}\Miju[\alpha p]+\sum_{l=1}^{q-1}\apol \Miju[lp]}.
\end{equation*}
The momentum velocity tensors are obtained multiplying by $\Mijinvu[pn]$ for any $0\leq n\leq q-1$ and summing over $p$:
\begin{equation*}
\displaystyle{\Lau{\alpha\beta}{n}=(a^{\alpha\beta}_0+\utilde^{\alpha}\utilde^{\beta})\delta_{0n}+\utilde^{\alpha}\delta_{\beta n}+\utilde^{\beta}\delta_{\alpha n}+\smash{\sum_{l=1}^{q-1}}\apol \delta_{ln}}.
\end{equation*}
 So $$\forall  n\notin\{0,\alpha,\beta\}, ~\forall\widetilde{u},\quad \Lau{\alpha\beta}{n}=a^{\alpha\beta}_{n},$$ that is a real independent of $\widetilde{u}$. 
\end{proof}	
Particularly, the proof shows that $\Lau{\alpha\beta}{l}=0$ for $l$ corresponding to a moment of order greater than three.

\begin{corollary}\label{th:coreqo2}
 The second order equation on momentum does not depend on the velocity field $\wdu$:
\begin{equation*}\label{eq:coreqo2}
\displaystyle{\partial_t q^{\alpha}+\sum_{\beta}\partial_{\beta}\Fab-\Delta\Big(\sum_{\beta,l>d}\sigma_l~\Lauz{\alpha\beta}{l}~ \partial_\beta\thz[l]\Big)= {\rm \mathcal{O}}(\Delta^2)},\quad1\leq\alpha\leq d.
\end{equation*}
\end{corollary}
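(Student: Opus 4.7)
The plan is to start from the second order momentum equation provided by Proposition \ref{th:ordre2} and replace each factor inside the $\Delta$-sum by its $\widetilde{u}=0$ counterpart, using the two key structural facts just established.

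First, I would split the summation index $l>d$ according to the degree of the associated polynomial $P_l$. Since the first $d+1$ polynomials are the conserved ones of degrees $0$ and $1$, the condition $l>d$ means $\deg P_l \geq 2$. I split into two subcases: $\deg P_l = 2$ and $\deg P_l \geq 3$. For the high-degree case, the final remark after Proposition \ref{th:tensoreq} shows that $\Lau{\alpha\beta}{l}=0$ (and likewise $\Lauz{\alpha\beta}{l}=0$), so these contributions vanish identically on both sides. Only the second-order moments $l$ survive.

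For the surviving indices (those with $\deg P_l = 2$), note that $l \notin \{0,\alpha,\beta\}$ since $0,\alpha,\beta$ correspond to polynomials of degrees $0$ and $1$. Thus Proposition \ref{th:tensoreq} applies and gives $\Lau{\alpha\beta}{l} = \Lauz{\alpha\beta}{l}$, a constant in space and time. Proposition \ref{th:conseq} gives $\thk[l] = \thz[l] + \mathcal{O}(\Delta)$. Combining,
\begin{equation*}
\Lau{\alpha\beta}{l}\,\thk[l] = \Lauz{\alpha\beta}{l}\,\thz[l] + \mathcal{O}(\Delta).
\end{equation*}
Differentiating with respect to $x_\beta$ and using that $\Lauz{\alpha\beta}{l}$ is a numerical constant,
\begin{equation*}
\partial_\beta\bigl(\Lau{\alpha\beta}{l}\,\thk[l]\bigr) = \Lauz{\alpha\beta}{l}\,\partial_\beta \thz[l] + \mathcal{O}(\Delta).
\end{equation*}

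Finally, the whole correction sum in Proposition \ref{th:ordre2} is prefactored by $\Delta$, so the $\mathcal{O}(\Delta)$ errors above are absorbed into the remainder as $\mathcal{O}(\Delta^2)$, yielding the announced equation. The main delicate point is the bookkeeping of the indices: one must verify that every $l>d$ is either killed by $\Lau{\alpha\beta}{l}=0$ (degree $\geq 3$) or controlled by the conjunction of Propositions \ref{th:conseq} and \ref{th:tensoreq} (degree $= 2$), and in particular that $l\notin\{0,\alpha,\beta\}$ automatically holds in the degree-$2$ case so that Proposition \ref{th:tensoreq} is indeed applicable. Everything else is a substitution and an absorption of $\Delta\cdot\mathcal{O}(\Delta)$ into the remainder.
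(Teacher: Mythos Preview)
Your proposal is correct and follows exactly the route the paper indicates: the paper itself does not give a detailed proof of the corollary but simply states that ``the combination of those two propositions gives the expected result,'' referring to Propositions~\ref{th:conseq} and~\ref{th:tensoreq} (and the remark after the latter that $\Lau{\alpha\beta}{l}=0$ for moments of order $\geq 3$). Your splitting by the degree of $P_l$, the use of $\Lau{\alpha\beta}{l}=\Lauz{\alpha\beta}{l}$ as a numerical constant to pull it through $\partial_\beta$, and the absorption of $\Delta\cdot\mathcal{O}(\Delta)$ into the remainder are precisely the intended bookkeeping.
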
	
Thus, the second order equivalent equations of the $\ddqq$ scheme with relative velocities are the same as the $\ddqq$ d'Humières ones \cite{dHu:1994:0}. As a consequence, these two different schemes are supposed to approach the same limit problem.
To prepare the third order, the expansion of the non conserved moments is needed. 

 \begin{proposition}\label{th:transition2}{\rm Third order transition lemma.}
  The second order expansion of the conservation default for the conserved moments is, 
\begin{equation}\label{eq:conso2}
\thk=\Delta\Big(\sum_{j,l>d}\sigma_l~\Miju[kj]~\dtj\big(\Mijinvu \thk[l]\big)\Big)+{\rm \mathcal{O}}(\Delta^2),\quad 0\leq k\leq d.
\end{equation}
The non conserved moments before and after the collision read,
\begin{align}
\mku&=~\mkueq-\Delta \Big(\frac{1}{2}+\sigma_k\Big)\xi_k(\utilde,\Delta,\sigma)+{\rm \mathcal{O}}(\Delta^3),~ & d< k\leq q-1,\label{eq:mnc1}\\
\mkue&=~\mkueq+\Delta\Big(\frac{1}{2}-\sigma_k\Big)\xi_k(\utilde,\Delta,\sigma)+{\rm \mathcal{O}}(\Delta^3),~ & d< k\leq q-1,\label{eq:mnc2}
\end{align}
 with 
\begin{equation*}
\xi_k(\utilde,\Delta,\sigma)=\thk{-}\Delta \Big(\sum_{j,l>d}\sigma_l~\Miju[kj]~\dtj\big(\Mijinvu \thk[l]\big)\Big).
\end{equation*}
\end{proposition}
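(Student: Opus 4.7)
My plan is to extend the Taylor expansion of the scheme one order beyond the proof of Prop.~\ref{th:transition1} and to feed in the first-order expansions of the pre- and post-collision distributions obtained there. From (\ref{eq:dvgen}) with $p=2$ I isolate
\begin{equation*}
\mkue - \mku = \Delta \sum_j \Miju[kj] \Bigl(\partial_t \fj + \sum_\beta v_j^\beta \partial_\beta \fje\Bigr) + \frac{\Delta^2}{2} \sum_j \Miju[kj] \Bigl(\partial_t^2 \fj - \sum_{\beta,\gamma} v_j^\beta v_j^\gamma \partial^2_{\beta\gamma} \fje\Bigr) + \mathcal{O}(\Delta^3).
\end{equation*}
Since $\fj = \fjeq + \mathcal{O}(\Delta)$ and $\fje = \fjeq + \mathcal{O}(\Delta)$ by Prop.~\ref{th:ordre0}, the $\Delta^2$ term evaluates at equilibrium up to $\mathcal{O}(\Delta^3)$; factoring $\partial_t^2 - (\sum_\beta v_j^\beta \partial_\beta)^2 = (\partial_t - \sum_\beta v_j^\beta \partial_\beta) \dtj$ brings it into the form $\tfrac{\Delta^2}{2} \sum_j \Miju[kj] (\partial_t - \sum_\beta v_j^\beta \partial_\beta) \dtj \fjeq$.

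For the linear term, I invert Prop.~\ref{th:transition1} at the distribution level via $\MatMinvu$:
\begin{equation*}
\fj - \fjeq = -\Delta \sum_{l>d} \tfrac{1}{s_l} \Mijinvu[jl] \thk[l] + \mathcal{O}(\Delta^2), \quad \fje - \fjeq = \Delta \sum_{l>d} \bigl(1 - \tfrac{1}{s_l}\bigr) \Mijinvu[jl] \thk[l] + \mathcal{O}(\Delta^2).
\end{equation*}
Writing $\tfrac{1}{s_l} = \tfrac12 + \sigma_l$ and using the decomposition $-(\tfrac12+\sigma_l)\partial_t + (\tfrac12-\sigma_l)\sum_\beta v_j^\beta \partial_\beta = -\tfrac12(\partial_t - \sum_\beta v_j^\beta \partial_\beta) - \sigma_l \dtj$, the $\Delta$-order correction produces, after multiplying by $\Miju[kj]$ and summing over $j$, one piece proportional to $(\partial_t - \sum_\beta v_j^\beta \partial_\beta)$ and one piece proportional to $\sigma_l \dtj$. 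The decisive identity $\sum_l \Mijinvu[jl] \thk[l] = \dtj \fjeq$, immediate from $\MatMinvu \MatMu = I$ and the definition of $\thk[l]$, together with Lemma~\ref{th:conscons} that lets me replace $\sum_{l>d}$ by $\sum_l$ modulo $\mathcal{O}(\Delta)$, makes the first piece cancel the $\tfrac{\Delta^2}{2}$ Taylor term exactly up to $\mathcal{O}(\Delta^3)$. Only the $\sigma_l$-piece survives, yielding
\begin{equation*}
\mkue - \mku = \Delta \thk - \Delta^2 \sum_{j, l > d} \sigma_l \Miju[kj] \dtj(\Mijinvu[jl] \thk[l]) + \mathcal{O}(\Delta^3) = \Delta \, \xi_k(\utilde, \Delta, \sigma) + \mathcal{O}(\Delta^3).
\end{equation*}

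The two sets of identities then follow by specialization. For $k \le d$, conservation forces $\mkue - \mku = 0$, so $\xi_k = \mathcal{O}(\Delta^2)$, which rearranges into (\ref{eq:conso2}). For $d < k \le q{-}1$, the relaxation relation $\mkue - \mku = s_k(\mkueq - \mku)$ combined with the displayed identity yields $\mku - \mkueq = -\tfrac{\Delta}{s_k} \xi_k + \mathcal{O}(\Delta^3) = -\Delta(\tfrac12 + \sigma_k)\xi_k + \mathcal{O}(\Delta^3)$, that is (\ref{eq:mnc1}); adding $\Delta \xi_k$ recovers (\ref{eq:mnc2}). The main obstacle is precisely this cancellation step: one has to spot that the $\tfrac12$-halves coming from the symmetric treatment of $\fj$ and $\fje$ conspire with the second-order Taylor remainder to annihilate all $\sigma$-free contributions, leaving only the clean $\sigma_l$-weighted sum that defines $\xi_k$. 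The space-time dependence of $\MatMu$ and $\MatMinvu$ through $\utilde$ adds bookkeeping but no genuine difficulty, since these matrices remain inside the differential operators throughout and are never themselves expanded.
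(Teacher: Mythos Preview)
Your proof is correct and follows essentially the same route as the paper's: expand the scheme to third order, insert the first-order expansions of $\fj$ and $\fje$ from Prop.~\ref{th:transition1}, split into $\sigma$-free and $\sigma$-weighted parts, extend the $\sigma$-free sum to all $l$ via Lemma~\ref{th:conscons}, and observe that it cancels the second-order Taylor remainder. Your explicit factorization $\partial_t^2-(\sum_\beta v_j^\beta\partial_\beta)^2=(\partial_t-\sum_\beta v_j^\beta\partial_\beta)\,\dtj$ and the identity $\sum_l \Mijinvu[jl]\,\thk[l]=\dtj\fjeq$ make the cancellation more transparent than the paper's phrase ``some matricial simplifications occur'', but the underlying computation is identical.
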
	

The proof is presented in Appendix \ref{annexe2}.

\subsection{Third order expansion}
\begin{proposition}\label{th:ordre3}{\rm Third order equivalent equations.}
The mass  and the momentum conservation equations read for all $1\leq\alpha\leq d$,
\begin{equation}\label{eq:masse3}
\partial_t \rho+\sum_{\beta} \partial_{\beta}q^{\beta}-\frac{\Delta^2}{12}\Big(\sum_{\alpha,\beta, j}\partial^2_{\alpha\beta}\big(\vj^{\alpha}\vj^{\beta}~\operatorname{d}_t^j\fjeq\big)\Big)= {\rm \mathcal{O}}(\Delta^3),
 \end{equation}

\begin{multline}\label{eq:momentum3}
\partial_t q^{\alpha}+\sum_{\beta}\partial_{\beta}\Fab =  \Delta\Big(\sum_{\beta,l>d}\sigma_l~\partial_\beta\big(\Lauz{\alpha\beta}{l} \thz[l]\big)\Big)\\
 -\Delta^2 \Big(-\frac{1}{6}\sum_{\beta,j}~\vj^{\alpha}\vj^{\beta}\partial_{\beta}(\dtjfc)+\frac{1}{12}\sum_{\beta,\gamma,j} v_j^\alpha v_j^\beta  v_j^\gamma~\partial^2_{\beta\gamma}(\dtjf)\\
+\sum_{\beta,q,l>d,p>d}\sigma_l\sigma_p~ \partial_{\beta}\big(\Lau{\alpha\beta}{l}\Miju[lq]\dtj[q](\Mijinvu[qp]\thk[p])\big)\Big)+{\rm \mathcal{O}}(\Delta^3).
\end{multline}
\end{proposition}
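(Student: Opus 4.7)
The overall plan is to push the Taylor identity (\ref{eq:dvgen}) to order $p=3$ and then treat the mass row ($k=0$) and the $d$ momentum rows ($1\leq k=\alpha\leq d$) separately. Throughout, the two transition lemmas of Prop.~\ref{th:transition1} and Prop.~\ref{th:transition2} will be invoked to replace the pre- and post-collision non-conserved moments by polynomials in $\Delta$ whose coefficients involve only the conservation defaults $\thk[l]$ with $l>d$, and the zeroth-order identity of Prop.~\ref{th:ordre0} to turn every $\fje$ into $\fjeq$ at the cost of controlled $\mathcal{O}(\Delta)$ remainders.

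For the mass equation (\ref{eq:masse3}), I would use that $\Miju[0j]=1$ is $\utilde$-independent and that $\rho$ is conserved, so the $\utilde$-field drops out of the $k=0$ row of (\ref{eq:dvgen}) entirely. The resulting identity is formally the same as in the d'Humi\`eres case: the successive time derivatives of $\rho$ are eliminated by iterating the first and second order mass equations (\ref{eq:masse1})--(\ref{eq:masse2}), and the mixed space-time derivatives regroup via the particular derivative $\operatorname{d}_t^j$ of Def.~\ref{th:defderdir} into the form displayed in (\ref{eq:masse3}). No $\utilde$-specific obstacle arises here.

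For the momentum equation (\ref{eq:momentum3}), I would split $\Miju[\alpha j]=v_j^{\alpha}-\utilde^{\alpha}$ in every term of the order-$\Delta^3$ expansion. The $v_j^{\alpha}$ part reproduces the classical d'Humi\`eres third-order momentum identity and yields the Euler flux $\sum_\beta \partial_\beta \Fab$, the first order correction $\Delta\sum_{\beta,l>d}\sigma_l\partial_\beta(\Lauz{\alpha\beta}{l}\thz[l])$ of Cor.~\ref{th:coreqo2}, and the two purely kinetic $\Delta^2$ sums on the first displayed line of (\ref{eq:momentum3}). The $\utilde^{\alpha}$ part, after pulling $\utilde^{\alpha}$ out of the $j$-sum, is exactly the mass row and is therefore $\mathcal{O}(\Delta^2)$ by (\ref{eq:masse2}); I would then check that this leftover combines with the relaxation-generated pieces into the single triple sum with $\Lau{\alpha\beta}{l}\Miju[lq]\operatorname{d}_t^q(\Mijinvu[qp]\thk[p])$ appearing on the last line of (\ref{eq:momentum3}), by feeding in the sharp formulas (\ref{eq:mnc1})--(\ref{eq:mnc2}) and the conservation-default expansion (\ref{eq:conso2}) of Prop.~\ref{th:transition2}.

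The main obstacle is the bookkeeping of this last $\Delta^2$ block. Because $\utilde$ is an arbitrary function of space and time, the particular derivative $\operatorname{d}_t^q$ does not commute with the $\utilde$-dependent entries of $\MatMu$ and $\MatMinvu$, and one cannot pull the shifting matrix through the derivatives. I expect the cleanest route to be: expand $\Miju[kj]$ and $\Mijinvu[jl]$ polynomially in $\utilde$, as in the proof of Prop.~\ref{th:conseq}, and track the resulting derivatives of $\utilde$ term by term; then apply Prop.~\ref{th:tensoreq} to remove every momentum velocity tensor $\Lau{\alpha\beta}{l}$ that involves a moment of order $\geq 3$, and Lemma~\ref{th:conscons} together with Prop.~\ref{th:conseq} to annihilate the $\utilde$-dependent contributions coming from the conservation defaults of the second order moments. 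What survives is exactly the advertised cascaded correction, everything else being absorbed in the $\mathcal{O}(\Delta^3)$ remainder.
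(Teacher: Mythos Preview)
Your skeleton---push (\ref{eq:dvgen}) to $p=3$, split $\Miju[\alpha j]=v_j^{\alpha}-\utilde^{\alpha}$, invoke the transition lemmas---matches the paper, and the mass row is handled as you describe.

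The last paragraph, however, misreads the target. Proposition~\ref{th:ordre3} does \emph{not} assert that the third order momentum equation is $\utilde$-independent: the last $\Delta^2$ sum in (\ref{eq:momentum3}) depends on $\utilde$ explicitly through $\Miju[lq]$, $\Mijinvu[qp]$ and $\thk[p]$. Your plan to ``annihilate the $\utilde$-dependent contributions'' with Prop.~\ref{th:conseq} and Prop.~\ref{th:tensoreq} is therefore aiming at the wrong statement. Those two propositions were the engine of Cor.~\ref{th:coreqo2} at second order; at third order they are not invoked, and no $\utilde$-cancellation is expected or proved.

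The paper's actual mechanism for the $\Delta^2$ block is not a polynomial expansion of $\MatMu$ in $\utilde$. It stays in the shifted frame throughout: one writes $J_1=\sum_{\beta,j}v_j^{\alpha}v_j^{\beta}\partial_{\beta}\fje$ via $\fje=\sum_l\Mijinvu[jl]\,\mkue[l]$, inserts (\ref{eq:mnc2}), and separates the $\sigma$-free part from the $\sigma$-dependent part. In the $\sigma$-free part the sum over $l>d$ is completed to a sum over all $l$---the missing $l\leq d$ terms are supplied precisely by (\ref{eq:conso2})---after which the identity $\sum_l\Mijinvu[jl]\Miju[lq]=\delta_{jq}$ collapses everything and produces the two purely kinetic $\Delta^2$ terms of (\ref{eq:momentum3}). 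The $\sigma$-dependent part cannot be collapsed in the same way because the factor $\sigma_l$ obstructs the $l$-sum; it survives verbatim as the last line of (\ref{eq:momentum3}), $\utilde$-dependence included. So the non-commutation of $\dtj[q]$ with $\MatMu$ is never an obstacle: one simply leaves those factors inside the derivative, and the key trick is ``complete the $l$-sum, then use $\MatMu\MatMinvu=I$'' rather than Taylor-expanding the matrix entries in $\utilde$.

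A smaller point: the $\utilde^{\alpha}$ part of the split leaves no $\Delta^2$ leftover to combine with anything. The full $k=0$ row of (\ref{eq:dvgen}) at $p=3$ is $\mathcal{O}(\Delta^4)$ before dividing by $\Delta$, so $\utilde^{\alpha}$ times it is already $\mathcal{O}(\Delta^3)$ and goes straight into the remainder.
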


The proof is presented in Appendix \ref{annexe3}. A second order term independent of the velocity field appears in the mass conservation equation. This term can not be canceled because it does not depend on the scheme parameters. We recover the third order equations derived in \cite{Dub:2009:0} for the d'Humières scheme by taking the velocity field equal to $0$. Indeed in (\ref{eq:momentum3}), the momentum velocity tensor $\Lauz{\alpha\beta}{l}$ does not depend on $\utilde$ and pass through the derivatives.  The second line terms, that are constants of $\utilde$, are already present in  \cite{Dub:2009:0}. The last term becomes
$$\sum_{\beta,q,l>d,p>d}\sigma_l\sigma_p \partial_{\beta}\left(\Lauz{\alpha\beta}{l}\Mijz[lq]\Mijinvz[qp](\partial_t+\sum_{\gamma}v_q^{\gamma}\partial_{\gamma})\left(\thz[p]\right)\right),$$
and separating the particular derivative in two sums yields:
$$\sum_{\beta,q,l>d}\sigma_l^2 \partial^2_{t\beta}\left(\Lauz{\alpha\beta}{l}\thz[l]\right)+\sum_{\beta,\gamma,l>d,p>d}\sigma_l\sigma_p \partial^2_{\beta\gamma}\left(\Lauz{\alpha\beta}{l}\Lauz{l\gamma}{p}\thz[p]\right),$$
that gives exactly the two $\sigma$ dependent third order terms of \cite{Dub:2009:0}. 

If we don't make any assumption on the velocity field, there is one new second order term in the momentum equation. It depends on $\utilde$. We also note the presence of the particular derivative $\dtj$ in all the terms of order greater than one.

%%%%%%%%%%%%%%%%%%%%%%%%%%%%%%%%%%
\section{Conclusion}
%%%%%%%%%%%%%%%%%%%%%%%%%%%%%%%%%%

The schemes with relative velocities are a generalization of the MRT approach and of the cascaded method. They appear as MRT  schemes whose relaxation happens in a moving frame at a given velocity. This theory does not add expensive steps to the implementation.
A consistency analysis shows that the second order equivalent equations are independent of the velocity field. The velocity field only plays a role for any order greater than three.
The next step consists in studying the issue of stability and determining whether or not, in what kind of regimes, these schemes bring some improvements.

\section*{Acknowledgements}
We would like to thank the referees for their positive remarks which allowed us to improve the paper.

\begin{appendix}
\section{Composition of the matrix (\ref{eq:matTu})}\label{annexe5}
We present here the blocks of the triangular matrix (\ref{eq:matTu}) defining the $\ddqn$ scheme with relative velocity presented in section \ref{se:example}.
\begin{equation*}
\setlength\arraycolsep{0.25 pt}
A=
\begin{pmatrix} 
-\wdun{1}\\
-\wdun{2}
\end{pmatrix}, ~
B_1=
\begin{pmatrix} 
\dsp{\tfrac{3}{\lambda^2}|\wdu|^2}\\
\dsp{\tfrac{1}{\lambda^2}((\wdun{1})^2-(\wdun{2})^2)}\\
\dsp{\tfrac{1}{\lambda^2}\wdun{1}\wdun{2}}
\end{pmatrix}, ~
B_2=
\begin{pmatrix}
\dsp{-\tfrac{6}{\lambda^2}\wdun{1}}&\dsp{-\tfrac{6}{\lambda^2}\wdun{2}}\\
\dsp{-\tfrac{2}{\lambda^2}\wdun{1}}&\dsp{\tfrac{2}{\lambda^2}\wdun{2}}\\
\dsp{-\tfrac{1}{\lambda^2}\wdun{2}}&\dsp{-\tfrac{1}{\lambda^2}\wdun{1}}
\end{pmatrix},
\end{equation*} 

\begin{equation*}
C_1=
\begin{pmatrix} 
\dsp{-\tfrac{3}{\lambda^3}\wdun{1}(|\wdu|^2+\lambda^2)}\\
\dsp{-\tfrac{3}{\lambda^3}\wdun{2}(|\wdu|^2+\lambda^2)}
\end{pmatrix},~
C_2=
\begin{pmatrix} \dsp{\tfrac{3}{\lambda^3}(|\wdu|^2+2(\wdun{1})^2)}&\dsp{\tfrac{6}{\lambda^3}\wdun{1}\wdun{2}}\\
\dsp{\tfrac{6}{\lambda^3}\wdun{1}\wdun{2}}&\dsp{\tfrac{3}{\lambda^3}(|\wdu|^2+2(\wdun{2})^2)}
\end{pmatrix},
\end{equation*} 

\begin{equation*}
C_3=
\begin{pmatrix} 
-\tfrac{2}{\lambda}\wdun{1}&-\tfrac{3}{\lambda}\wdun{1}&-\tfrac{6}{\lambda}\wdun{2}\\
-\tfrac{2}{\lambda}\wdun{2}&\tfrac{3}{\lambda}\wdun{2}&-\tfrac{6}{\lambda}\wdun{1}
\end{pmatrix},
\end{equation*} 

\begin{equation*}
D_1=\dsp{\tfrac{9}{2\lambda^4}(|\wdu|^4+3\lambda^2|\wdu|^2)},~
D_2=
\begin{pmatrix} \dsp{-\tfrac{9}{2\lambda^4}\wdun{1}(\lambda^2+2|\wdu|^2)}&\dsp{-\tfrac{9}{2\lambda^4}\wdun{2}(\lambda^2+2|\wdu|^2)}
\end{pmatrix}, 
\end{equation*}

\begin{equation*}
\setlength\arraycolsep{0.25 pt}
D_3=
\begin{pmatrix} \dsp{\tfrac{6}{\lambda^2}|\wdu|^2}&\dsp{\tfrac{9}{\lambda^2}((\wdun{1})^2-(\wdun{2})^2)}&\dsp{\tfrac{36}{\lambda^2}\wdun{1}\wdun{2}}
\end{pmatrix}, ~
D_4=
\begin{pmatrix} \dsp{-\tfrac{6}{\lambda}\wdun{1}}&\dsp{-\tfrac{6}{\lambda}\wdun{2}}
\end{pmatrix}. 
\end{equation*}

The following code is the source which allows the reader to verify  the components of the matrix $\MatTu$ just above and the proposition \ref{th:csd2q9}.

\begin{verbatim}
restart:
with(LinearAlgebra):

## Scheme velocities:
u := Transpose(Matrix([[0,0],[la,0],[0,la],[-la,0],
[0,-la],[la,la],[-la,la],[-la,-la],[la,-la]])):

## The moments
m := [1, X, Y, 6/la^2*(X^2+Y^2)/2-4,  (X^2-Y^2)/la^2, X*Y/la^2,
 (6/la^3*X*(X^2+Y^2)/2-5/la*X), (6/la^3*Y*(X^2+Y^2)/2-5/la*Y),
 18/la^4*(X^2+Y^2)^2/4+4-21/la^2*(X^2+Y^2)/2];

##The matrices of moments:
d := RowDimension(u): lv := ColumnDimension(u):
M := Matrix(lv,lv):Mu := Matrix(lv,lv):
for k from 1 to nops(m) do
  for l from 1 to lv do
    M[k,l] := simplify(subs({X=u[1,l],Y=u[2,l]},op(k,m)));          
    Mu[k,l] := simplify(subs({X=u[1,l]-ux,Y=u[2,l]-uy},op(k,m)));   
  end do:
end do:
M;
invM:=M^(-1):

#The shifting matrix :
Tu:=unapply(simplify(Mu.invM),(ux,uy)):
Tu(ux,uy);

#A morphism of group is verified:
simplify(Tu(ux+vx,uy+vy)-Tu(ux,uy).Tu(vx,vy));
\end{verbatim}

%%%%%%%%%%%%%%%%%%%%%%%%%%%%%%%%%%
\section{Proof of proposition \ref{th:cascd2q9} }\label{annexe4}
%%%%%%%%%%%%%%%%%%%%%%%%%%%%%%%%%%

The aim of this paragraph is to identify the Geier's cascaded method \cite{Geier:2006:0} as a scheme with relative velocities. 
One difference with the d'Humières schemes lays in the choice of the collision operator. 
In the two dimensional case, for a seek of invariance by translation and rotation, the collision operator is chosen as $^t\MatM(0)\vectg,$  where  $\MatM(0)$ is the matrix defined by (\ref{eq:matort}) and $\vectg$ is the unknown vector leading to the post-collision state. The cascaded relaxation step is then given by
\begin{equation}\label{eq:relgeier}
\vectfe=\vectf+~^t\MatM(0)\vectg.
\end{equation}
We point out the fact that $\MatM(0)$ does not correspond to the chosen moments for which an other set of polynomials is used:
$$1,X,Y,X^2,Y^2,XY,XY^2,X^2Y,X^2Y^2.$$
In the cascaded framework  \cite{Geier:2006:0}, the relaxation is written into the frame moving at the fluid velocity $u=q/\rho$.
We note $\vectmug$ and $\MatM_{\!g}(u)$ the shifted vector and matrix of moments associated with this polynomial set using the same notations as in section \ref{sec:S22}. The relation (\ref{eq:relgeier}) can thus be written as 
\begin{equation}\label{eq:relgeier2}
\vectmueg=\vectmug+\MatCu \vectg,
\end{equation} where $\MatCu=\MatM_{\!g}(u)~^t\MatM(0)$. 

To evaluate $\vectg$, Geier determinates the direction to reach the equilibrium state and relaxes it by multiplying by the relaxation parameters $s_k$ for $0\leq k\leq8$. The block triangular structure of the matrix $\MatCu$ is used to invert itself. 
In the case of the \ddqn scheme, this inversion begins by the resolution of a $2{\times}2$ system in order to obtain the first two components of the vector $\vectg$. 
\begin{equation}\label{eq:casc1}
\begin{split}
(\vectmequg-\vectmug)_{_{3,4}}&=\MatCu_{_{3\leq i,j\leq 4}}\begin{pmatrix} s_3&0\\
0&s_4
\end{pmatrix} \begin{pmatrix} g_3\\
g_4
\end{pmatrix}
=\begin{pmatrix} s^+&s^-\\
s^-&s^+
\end{pmatrix}^{\!-1}\MatCu_{_{3\leq i,j\leq 4}} \begin{pmatrix} g_3\\
g_4
\end{pmatrix},
\end{split}
\end{equation}
where $$s^+=\frac{s_3+s_4}{2},\quad s^-=\frac{s_3-s_4}{2}.$$
The other moments are successively relaxed, using the previously calculated post-collision values of $\vectg$, 
\begin{equation}\label{eq:casc2}
s_k(\vectmequg-\vectmug)_k=\sum_{j=1}^k\MatCu_{_{kj}}g_{_{j}}.
\end{equation}
We gather the relations (\ref{eq:relgeier2},\ref{eq:casc1},\ref{eq:casc2}) to obtain the relaxation step: $$\vectmueg=\vectmug+\MatCu\vectg = \vectmug+\Lambda(\vectmequg-\vectmug),$$ where $$\Lambda = \operatorname{diag}\left(\begin{pmatrix} 0&0&0\\
\end{pmatrix}, \begin{pmatrix} s^+&s^-\\
s^-&s^+
\end{pmatrix}, \begin{pmatrix} s_5&s_6&s_7&s_8
\end{pmatrix}\right).$$ 
This block diagonal matrix can be diagonalized. 
The relaxation of the $\ddqn$ cascaded scheme is diagonal into the set of moments corresponding to (\ref{eq:polgeier})
shifted with respect to the fluid velocity $u$. The cascaded scheme is then a lattice Boltzmann scheme with relative velocities as defined in section \ref{sec:S22}.

\section{Proof of proposition \ref{th:ordre2} }\label{annexe1}
At the third order, forall $0\leq k\leq q-1$, the expansion (\ref{eq:dvgen}) yields:
\begin{multline}\label{eq:dvlpt2}
\mku+\Delta\sum_j \Miju~\partial_t \fj+\frac{\Delta^2}{2}\sum_j \Miju~\partial^2_t \fjeq+\mathcal{O}(\Delta^3)\\
=\mkue-\Delta\sum_{\beta, j} \Miju v_j^\beta~ \partial_{\beta}\fje+\frac{\Delta^2}{2}\sum_{\beta,\gamma,j} \Miju~v_j^\beta  v_j^\gamma ~\partial^2_{\beta\gamma}\fjeq+{\rm \mathcal{O}}(\Delta^3).
\end{multline}

\textbf{Case 1: $k=0$.} 
From (\ref{eq:dvlpt2}), we get
\begin{equation}\label{eq:prequation}
\displaystyle{ \partial_t \rho+\sum_{\beta}  \partial_{\beta}\Big(\sum_jv_j^\beta \fje\Big)+\frac{\Delta}{2}\Big(\partial^2_t \rho-\sum_{\beta,\gamma,j} v_j^\beta ~ v_j^\gamma~ \partial^2_{\beta\gamma}\fjeq\Big)={\rm \mathcal{O}}(\Delta^2)}.
\end{equation}
 The proof follows what is known for the d'Humières scheme \cite{Dub:2008:0}: the first order term is equal to $ \partial_t \rho+\operatorname{div}(q)$; due to the formal derivation of Prop.~\ref{th:ordre1}, a wave equation is obtained, 
\begin{equation*}
\displaystyle{\partial^2_t \rho-\sum_{\beta,\gamma,j} v_j^\beta~  v_j^\gamma~ \partial^2_{\beta\gamma}\fjeq={\rm \mathcal{O}}(\Delta)}.
\end{equation*}
This gives the mass conservation (\ref{eq:masse2}).\\

 \textbf{Case 2: $1\leq k=\alpha\leq d$.}
The identity $\Miju[\alpha j]=v_j^{\alpha}-\wdu^{\alpha}$ is applied to (\ref{eq:dvlpt2}).
The term corresponding to $\wdua$ gives the equation (\ref{eq:prequation}) and goes in the rest. 
Thus the momentum equation can be written as
\begin{equation*}
\displaystyle{\partial_t q^\alpha+\sum_{\beta, j} v_j^\alpha~ v_j^\beta~ \partial_{\beta}\fje+\frac{\Delta}{2}\Big(\partial^2_t q^\alpha-\sum_{\beta,\gamma,j} v_j^\alpha~ v_j^\beta~  v_j^\gamma \partial^2_{\beta\gamma}\fjeq\Big)={\rm \mathcal{O}}(\Delta^2)}.
\end{equation*}
We set $\displaystyle{I_1=\sum_{\beta, j} v_j^\alpha~ v_j^\beta ~\partial_{\beta}\fje}$ et $\displaystyle{I_2=\sum_{\beta,\gamma,j} v_j^\alpha ~v_j^\beta~  v_j^\gamma ~\partial^2_{\beta\gamma}\fjeq}$. \\
 
To get the final equations involving only conserved and equilibrium quantities, we need to expand $\fje$ up to the second order. The use of a transition lemma on the non conserved moments is the key to reach this order.\\ 
 
 \textbf{Calculation of $I_1$: }
We have
\begin{equation*}
I_1=\sum_{\beta, j,l} v_j^\alpha~ v_j^\beta ~\partial_{\beta}\left(\Mijinvu~\mkue[l]\right).
\end{equation*}
We use the second order transition lemma presented in Prop.~\ref{th:transition1} to replace $\mkue[l]$,
and we gather terms by order to get:
\begin{equation*}
I_1=\sum_{\beta, j} v_j^\alpha ~v_j^\beta~ \partial_{\beta}\fjeq+\Delta\sum_{\beta,j,l>d}\Big(\frac{1}{2}-\sigma_l\Big)\partial_{\beta}\big(v_j^\alpha ~v_j^\beta~ \Mijinvu~\thk[l]\big)+{\rm \mathcal{O}}(\Delta^2).
\end{equation*}

Expanding the second sum to separate the terms with and without $\sigma_l$, the first order term of (\ref{eq:momentum2}) appears:

 \begin{equation*}
 I_1=~\smash{\sum_{\beta}} \partial_\beta \Fab+\frac{\Delta}{2}~\smash{\sum_{\beta,j,l}}\partial_{\beta}\left(v_j^\alpha~ v_j^\beta~ \Mijinvu~\thk[l]\right)
 -\Delta\sum_{\beta,l>d}\sigma_l\partial_\beta\left(\Lau{\alpha\beta}{l}~\thk[l]\right)+{\rm \mathcal{O}}(\Delta^2).
 \end{equation*}

 Note that the second sum carries on all $l$ because $\thk[l]={\rm \mathcal{O}}(\Delta)$ for all $l\leq d$ thanks to the lemma \ref{th:conscons}.
Using the definition (\ref{eq:defcons}) of $\thk[l]$, the second term is improved thanks to some matricial rearrangements. The third term can not be simplified in a same manner. Indeed the sum depends on $l$ via $\sigma_l$ that prevents the matricial simplifications.

\begin{equation*}
I_1=\sum_{\beta} \partial_\beta \Fab+\frac{\Delta}{2}\sum_{\beta,j}\partial_{\beta}(v_j^\alpha ~v_j^\beta (\partial_t+\sum_{\gamma}v_q^{\gamma}\partial_{\gamma})(\fjeq))
-\Delta\sum_{\beta,l>d}\sigma_l~\partial_\beta(\Lau{\alpha\beta}{l}~\thk[l])+{\rm \mathcal{O}}(\Delta^2).
\end{equation*}

Separating the particular derivative (\ref{eq:partd}) in two terms,
 the momentum flux and $I_2$ naturally appears in the first order part.
\begin{equation*}
I_1=\sum_{\beta} \partial_\beta \Fab+\frac{\Delta}{2}\Big(\partial_t\big(\sum_{\beta}\partial_{\beta}\Fab\big)+\sum_{\beta,\gamma,j}\partial^2_{\beta\gamma}(v_j^\alpha~ v_j^\beta~ v_j^\gamma ~\fjeq)\Big)
-\Delta\sum_{\beta,l>d}\sigma_l~\partial_{\beta}(\Lau{\alpha\beta}{l}~\thk[l])+{\rm \mathcal{O}}(\Delta^2).
\end{equation*}
We replace the momentum flux using Prop.~\ref{th:ordre1} 
and get the expected result. 
\begin{equation*}
\displaystyle{I_1+\frac{\Delta}{2}(\partial^2_t q^\alpha-I_2)=\sum_{\beta} \partial_\beta \Fab-\Delta\Big(\sum_{\beta,l>d}\sigma_l~\partial_{\beta}(\Lau{\alpha\beta}{l}~\thk[l])\Big)+{\rm \mathcal{O}}(\Delta^2)}.
\end{equation*}

%%%%%%%%%%%%%%%%%%%%%%%%%%%%%%%%%%
\section{Proof of proposition \ref{th:transition2}}\label{annexe2}
%%%%%%%%%%%%%%%%%%%%%%%%%%%%%%%%%%

The proof follows the same steps as for Prop.~\ref{th:transition1}.
Two different identities are used to develop $\mku-\mkue$: the first expression is given by the relaxation step (\ref{eq:relaxationu}),
 and the second by the Taylor expansion (\ref{eq:dvlpt2}). This expansion yields to: 
\begin{multline}\label{eq:rel}
\mku-\mkue=-\Delta\Big(\smash{\sum_j} \Miju~\partial_t \fj+\smash{\sum_{\beta, j}} \Miju~v_j^\beta~ \partial_{\beta}\fje\Big) \\
 +\frac{\Delta^2}{2}\Big({-}\sum_j \Miju\partial_t^2 \fjeq+\sum_{\beta,\gamma, j} \Miju~v_j^\beta\vj^{\gamma}~ \partial^2_{\beta\gamma}\fjeq\Big)+{\rm \mathcal{O}}(\Delta^3).
\end{multline}
Let us note $$K=\sum_{j} \Miju~\partial_t \fj+\sum_{\beta, j} \Miju~v_j^\beta~ \partial_{\beta}\fje,$$ the first order term in $\Delta$. This term depends on the values of the distributions before and after collision. It needs to be developed to get the quantities at the equilibrium. To do so, we replace those distributions in the moments frame using Prop.~\ref{th:transition1}.
The term of zeroth order is the conservation default. 
\begin{multline*}
K=\thk+\Delta \Big(-\sum_{j,l>d}\Big(\frac{1}{2}+\sigma_l\Big) \Miju~\partial_t \big(\Mijinvu\thk[l]\big)\\
+\sum_{\beta,j,l>d} \Big(\frac{1}{2}-\sigma_l\Big)  \Miju\vj^{\beta}~\partial_{\beta} \big(\Mijinvu\thk[l]\big)\Big)+{\rm \mathcal{O}}(\Delta^2).
\end{multline*}
We separate now the terms with and without $\sigma_l$. The lemma \ref{th:conscons} is applied to the sums independent of $\sigma$: thus these sums carry on all $l$. 
They are completely developed using the definition of the conservation default (\ref{eq:defcons}) and some matricial simplifications occur. The $\sigma$ dependent term is expressed thanks to the particular derivative. 
\begin{multline*}
 K=\thk+\frac{\Delta}{2} \Big(-\sum_j \Miju~\partial_t^2 \fjeq+\sum_{\beta,\gamma, j} \Miju~v_j^\beta\vj^{\gamma}~ \partial^2_{\beta\gamma}\fjeq\Big)\\
 -\Delta \Big(\sum_{j,l>d}\sigma_l \Miju~\dtj\big(\Mijinvu\thk[l]\big)\Big)+\mathcal{O}(\Delta^2).
\end{multline*}
We replace this relation in (\ref{eq:rel}).
\begin{equation*}
\mku-\mkue=-\Delta\Big(\thk{-}\Delta \big(\sum_{j,l>d}\sigma_l~\Miju[kj]~\dtj\big(\Mijinvu \thk[l]\big)\big)\Big)+{\rm \mathcal{O}}(\Delta^3).
\end{equation*}
For $k\leq d$, this relation gives the development (\ref{eq:conso2}).
For $k>d$, the relaxation phase gives the final developments (\ref{eq:mnc1},\ref{eq:mnc2}) of the non conserved moments.

\section{Proof of proposition \ref{th:ordre3} }\label{annexe3}

The fourth order of the equation (\ref{eq:dvgen}) reads:
\begin{multline*}
\mku+\Delta\sum_j \Miju~\partial_t \fj+\frac{\Delta^2}{2}\sum_j \Miju~\partial^2_t \fj+\frac{\Delta^3}{6}\sum_j \Miju~\partial^3_t \fjeq\\
=\mkue-\Delta\sum_{\beta, j} \Miju v_j^\beta~ \partial_{\beta}\fje
+\smash{\frac{\Delta^2}{2}}\sum_{\beta,\gamma,j} \Miju v_j^\beta  v_j^\gamma ~\partial^2_{\beta\gamma}\fje\\
-\smash{\frac{\Delta^3}{6}}\sum_{\beta,\gamma,\xi,j} \Miju v_j^\beta  v_j^\gamma\vj^{\xi} ~\partial^3_{\beta\gamma\xi}\fjeq+{\rm \mathcal{O}}(\Delta^4).
\end{multline*}

 \textbf{Case 1: $k=0$.} 
The same method as for the lower order expansions leads to
\begin{equation}\label{eq:prequation2}
\partial_t \rho+\sum_{\beta}  \partial_{\beta}q^{\beta}+\frac{\Delta}{2}\Big(\partial^2_t \rho-\sum_{\beta,\gamma,j} v_j^\beta ~ v_j^\gamma~ \partial^2_{\beta\gamma}\fje\Big)
+\frac{\Delta^2}{6}\Big(\partial^3_t \rho+\sum_{\beta,\gamma,\xi,j} v_j^\beta ~ v_j^\gamma~\vj^{\xi} \partial^3_{\beta\gamma\xi}\fjeq\Big)={\rm \mathcal{O}}(\Delta^3).
\end{equation}
We want to replace all the non equilibrium quantities and time derivatives. We use Prop.~\ref{th:transition1} on the first order term:
\begin{multline}\label{eq:fje}
\sum_{\beta,\gamma,j} v_j^\beta ~ v_j^\gamma~ \partial^2_{\beta\gamma}\fje
= \sum_{\beta,\gamma,j} v_j^\beta ~ v_j^\gamma~ \partial^2_{\beta\gamma}\fjeq 
+\Delta\sum_{\beta,\gamma,j,l>d} \smash{\Big(\frac{1}{2}-\sigma_l\Big)}v_j^\beta ~ v_j^\gamma~ \partial^2_{\beta\gamma}(\Mijinvu\thk[l])+{\rm \mathcal{O}}(\Delta^2).
\end{multline}
To evaluate $\partial^2_t \rho$, the second order equation on mass (\ref{eq:masse2}) and momentum (\ref{eq:momentum2}) are formally derived:
\begin{equation}\label{eq:ddtr}
\begin{split}
\partial^2_t\rho &=-\sum_{\gamma}\partial_{\gamma}(\partial_t q^{\gamma})+{\rm \mathcal{O}}(\Delta^2),\\
&=\sum_{\beta,\gamma,j} v_j^\beta  v_j^\gamma~ \partial^2_{\beta\gamma}\fjeq-\Delta\Big(\sum_{\beta,\gamma, l>d}\sigma_l~\partial^2_{\beta\gamma}(\Lau{\gamma\beta}{l} \thk[l])\Big)+{\rm \mathcal{O}}(\Delta^2).
\end{split}
\end{equation} 
 We gather the calculations (\ref{eq:fje},\ref{eq:ddtr}) and use the definition of the conservation default (\ref{eq:defcons}) to get, 
\begin{equation}\label{eq:fot}
\partial^2_t \rho ~-\smash{\sum_{\beta,\gamma,j}} v_j^\beta ~ v_j^\gamma~ \partial^2_{\beta\gamma}\fje=-\frac{\Delta}{2}\smash{\sum_{\beta,\gamma,j}} v_j^\beta ~ v_j^\gamma~ \partial^2_{\beta\gamma}\big(\dtjf\big)+{\rm \mathcal{O}}(\Delta^2).
\end{equation}
In a same manner, $\partial^3_t \rho$ can be expanded using the first order equations on mass (\ref{eq:masse1}) and momentum (\ref{eq:momentum1}):
\begin{equation*}
\partial^3_t\rho=\sum_{\beta,\gamma}  \partial^3_{\beta\gamma t} \Fba+{\rm \mathcal{O}}(\Delta).
\end{equation*}
As a consequence, the second order term can be written as
\begin{equation}\label{eq:sot}
\partial^3_t \rho+\smash{\sum_{\beta,\gamma,\xi,j}} v_j^\beta ~ v_j^\gamma~\vj^{\xi} \partial^3_{\beta\gamma\xi}\fjeq 
=\smash{\sum_{\beta,\gamma,j}} v_j^\beta ~ v_j^\gamma~ \partial^2_{\beta\gamma}\left(\dtjf\right)+{\rm \mathcal{O}}(\Delta).
\end{equation}
 Thus  replacing (\ref{eq:fot}) and (\ref{eq:sot}) in (\ref{eq:prequation2}), the expression of the first and second order quantities is given by:
$$-\frac{\Delta^2}{12}\Big(\smash{\sum_{\beta, j}}\partial^2_{\alpha\beta}(\vj^{\alpha}\vj^{\beta}\operatorname{d}_t^j\fjeq)\Big).$$
and the expected result is proven.\\

 \textbf{Case 2: $1\leq k=\alpha\leq d.$}
The following equation on the momentum is obtained,
\begin{multline}\label{eq:impulsion3}
\partial_t q^\alpha+\sum_{\beta, j} v_j^\alpha~ v_j^\beta~ \partial_{\beta}\fje+\frac{\Delta}{2}\Big(\partial^2_t q^\alpha-\sum_{\beta,\gamma,j} v_j^\alpha~ v_j^\beta~  v_j^\gamma \partial^2_{\beta\gamma}\fje\Big)\\
+\frac{\Delta^2}{6}\Big(\partial^3_t q^\alpha+\sum_{\beta,\gamma,\xi,j} v_j^\alpha~ v_j^\beta~  v_j^\gamma~\vj^{\xi}~ \partial^3_{\beta\gamma\xi}\fjeq\Big)={\rm \mathcal{O}}(\Delta^3).
\end{multline}
 We set $\displaystyle{J_1=\sum_{\beta, j} v_j^\alpha~ v_j^\beta ~\partial_{\beta}\fje}$ et $\displaystyle{J_2=\sum_{\beta,\gamma,j} v_j^\alpha ~v_j^\beta~  v_j^\gamma ~\partial^2_{\beta\gamma}\fje}$. \\
\\
$J_1$ must be expanded up to the third order and $J_2$ up to the second one to get the whole third order expansion.\\

\hspace{0.5cm}\textbf{Calculation of $J_1$: }
The third order transition lemma, presented in Prop.~\ref{th:transition2}, is applied
and the terms with and without $\sigma_l$ are separated. The expression is also splitted between the first and second order terms,
\begin{equation*}
\begin{split}
J_1=&~\sum_{\beta} \partial_\beta \Fab+\frac{\Delta}{2}\sum_{\beta,j,l>d}\partial_{\beta}\big(v_j^\alpha~ v_j^\beta~ \Mijinvu~\thk[l]\big)-\Delta\sum_{\beta,l>d}\sigma_l\partial_\beta\big(\Lau{\alpha\beta}{l}~\thk[l]\big)\\
&-\frac{\Delta^2}{2}\sum_{\beta,j,q,l> d,p>d}\sigma_p~\partial_{\beta}\Big(v_j^\alpha~ v_j^\beta~ \Mijinvu\Miju[lq]~\dtj[q]\big(\Mijinvu[qp] \thk[p]\big)\Big)\\
&+\Delta^2\sum_{\beta,q,p>d,l>d}\sigma_l\sigma_p~ \partial_{\beta}\Big( \Lau{\alpha\beta}{l}\Miju[lq]~\dtj[q]\big(\Mijinvu[qp] \thk[p]\big)\Big)+{\rm \mathcal{O}}(\Delta^3).
\end{split}
\end{equation*}
 We consider the second term $$L=\sum_{\beta,j,l>d}\partial_{\beta}\big(v_j^\alpha~ v_j^\beta~ \Mijinvu~\thk[l]\big).$$ We want to have a sum carrying on all $l$ in order to make some matricial simplifications using the definition of the conservation default (\ref{eq:defcons}). $L$ is written as the difference of the sums carrying on all $l$ and $l\leq d$. The first sum is completely developed and simplified using (\ref{eq:defcons}). Using the lemma \ref{th:conscons} on the second sum is inadequate: indeed $L$ needs to be developed up to the second order to get the expected third order.
 This is why we use (\ref{eq:conso2}) on this second sum. 
 
\begin{multline}\label{eq:L}
L=\sum_{\beta,j}\partial_{\beta}\big(v_j^\alpha~ v_j^\beta~\partial_{\beta}(\dtjf)\big)\\
-\Delta\sum_{\beta,j,q, l\leq d,p>d}\!\sigma_p~\partial_{\beta}\Big(v_j^\alpha~ v_j^\beta~ \Mijinvu\Miju[lq]~\dtj[q]\big(\Mijinvu[qp] \thk[p]\big)\Big)
+{\rm \mathcal{O}}(\Delta^2).
\end{multline} 

We keep unchanged the first order term and develop the particular derivative of the zeroth order one.
 The momentum flux appears in the time derivative and is replaced using the second order equivalent equation on momentum (\ref{eq:momentum2}):
\begin{equation}\label{eq:t1L}
\smash{\sum_{\beta,j}}\partial_{\beta}\big(v_j^\alpha~ v_j^\beta~\partial_{\beta}(\dtjf)\big)=
-\partial_t^2 q^{\alpha}+\Delta\Big(\smash{\sum_{\beta,l>d}}\sigma_l~\partial^2_{t\beta}(\Lau{\alpha\beta}{l} \thk[l])\Big)+\sum_{\beta,\gamma,j}v_j^\alpha~ v_j^\beta~\vj^{\gamma}\partial^2_{\beta\gamma}\fjeq.
\end{equation}
One can now simplify $J_1$ considering the calculations (\ref{eq:L},\ref{eq:t1L}). 
\begin{equation*}
\begin{split}
J_1=&~\sum_{\beta} \partial_\beta \Fab+\frac{\Delta}{2}\Big(-\partial_t^2 q^{\alpha}+\sum_{\beta,j}v_j^\alpha~ v_j^\beta~\vj^{\gamma}\partial^2_{\beta\gamma}\fjeq\Big)-\Delta\sum_{\beta,l>d}\sigma_l\partial_\beta\big(\Lau{\alpha\beta}{l}~\thk[l]\big)\\
&-\smash{\frac{\Delta^2}{2}}\sum_{\beta,j,q,\textbf{l},p>d}\sigma_p~\partial_{\beta}\Big(v_j^\alpha~ v_j^\beta~ \Mijinvu\Miju[lq]~\dtj[q]\big(\Mijinvu[qp] \thk[p]\big)\Big)+\frac{\Delta^2}{2}\sum_{\beta,l>d}\sigma_l~\partial^2_{t\beta}\big(\Lau{\alpha\beta}{l} \thk[l]\big)\\
&+\Delta^2\sum_{\beta,q,p>d,l>d}\sigma_l\sigma_p~ \partial_{\beta}\Big( \Lau{\alpha\beta}{l}\Miju[lq]~\dtj[q]\big(\Mijinvu[qp] \thk[p]\big)\Big)+{\rm \mathcal{O}}(\Delta^3).
\end{split}
\end{equation*}
We notice that the two first second order terms can be simplified: indeed after doing some matricial rearrangements,  the second term appears as a piece of the first one.
We get the final expression of $J_1$:
\begin{multline}\label{eq:J1}
J_1=~\sum_{\beta} \partial_\beta \Fab+\frac{\Delta}{2}\Big(-\partial_t^2 q^{\alpha}+\sum_{\beta,\gamma,j}v_j^\alpha~ v_j^\beta~\vj^{\gamma}\partial^2_{\beta\gamma}\fjeq\Big)\\
-\Delta\sum_{\beta,l>d}\sigma_l\partial_\beta\big(\Lau{\alpha\beta}{l}~\thk[l]\big)
-\smash{\frac{\Delta^2}{2}}\sum_{\beta,\gamma,j,l>d}\sigma_l~v_j^\alpha~ v_j^\beta~\vj^{\gamma}\partial^2_{\beta\gamma}\big(\Mijinvu \thk[l]\big)\\
+\Delta^2\sum_{\beta,q,p>d,l>d}\sigma_l\sigma_p~ \partial_{\beta}\Big( \Lau{\alpha\beta}{l}\Miju[lq]~\dtj[q]\big(\Mijinvu[qp] \thk[p]\big)\Big)
+{\rm \mathcal{O}}(\Delta^3).
\end{multline}

 \hspace{0.5cm}\textbf{Calculation of $J_2$: }
The proposition \ref{th:transition1} is used in the moments frame
and the terms with and without $\sigma_l$ are splitted. The conservation default appearing in the term independent of $\sigma$ is replaced by its definition (\ref{eq:defcons}). We obtain,
\begin{multline}\label{eq:J2}
J_2=\sum_{\beta,\gamma,j} v_j^\alpha ~v_j^\beta~  v_j^\gamma ~\partial^2_{\beta\gamma}\fjeq+\frac{\Delta}{2}\sum_{\beta,\gamma,j} v_j^\alpha ~v_j^\beta~  v_j^\gamma ~\partial^2_{\beta\gamma}\big(\dtjf\big)\\
-\Delta\sum_{\beta,\gamma,j,l>d} \sigma_l~ v_j^\alpha ~v_j^\beta~  v_j^\gamma ~\partial^2_{\beta\gamma}\big(\Mijinvu\thk[l]\big)+{\rm \mathcal{O}}(\Delta^2).
\end{multline}
 All the terms are now gathered in (\ref{eq:impulsion3}). We recall this equation: 
\begin{equation*}
 \begin{split}
&\partial_t q^\alpha{+}J_1{+}\frac{\Delta}{2}\big(\partial^2_t q^\alpha{-}J_2\big){+}\frac{\Delta^2}{6}\Big(\partial^3_t q^\alpha{+}\sum_{\beta,\gamma,\xi,j} v_j^\alpha~ v_j^\beta~  v_j^\gamma~\vj^{\xi}~ \partial^3_{\beta\gamma\xi}\fjeq\Big){=}{\rm \mathcal{O}}(\Delta^3).
\end{split}
\end{equation*}
 The first order terms are coming from the first order terms of $J_1$ (\ref{eq:J1}) and the zeroth ones of $J_2$ (\ref{eq:J2}).
 We immediately recover the expected expression:
$$-\sum_{\beta,l>d}\sigma_l~\partial_\beta(\Lau{\alpha\beta}{l} \thk[l]).$$
The second order terms come from the second order expansion of $J_1$ (\ref{eq:J1}), the first one of $J_2$ (\ref{eq:J2}) and from the $\Delta^2/6$ multiplier. Two terms are immediately vanishing. It remains: 
\begin{multline*}
\sum_{\beta,q,p>d,l>d}\sigma_l\sigma_p~ \partial_{\beta}\Big( \Lau{\alpha\beta}{l}\Miju[lq]~\dtj[q]\big(\Mijinvu[qp] \thk[p]\big)\Big)\\
+\frac{1}{6}\Big(\partial^3_t q^\alpha+\sum_{\beta,\gamma,\xi,j} v_j^\alpha~ v_j^\beta~  v_j^\gamma~\vj^{\xi}~ \partial^3_{\beta\gamma\xi}\fjeq\Big)-\frac{1}{4}\sum_{\beta,\gamma,j} v_j^\alpha ~v_j^\beta~  v_j^\gamma ~\partial^2_{\beta\gamma}\big(\dtjf\big).
\end{multline*}
The first term is an expected term: we keep it unchanged.
To treat $\partial^3_t q^\alpha$, the first order equation on the momentum (\ref{eq:momentum1}) is formally derived,
\begin{equation*}
\partial^3_t q^\alpha=-\partial_t^2\Big(\sum_{\beta}\partial_{\beta}\Fab\Big)+{\rm \mathcal{O}}(\Delta)=-\sum_{\beta,j}\vj^{\alpha}\vj^{\beta}\partial^3_{tt\beta}\fjeq+{\rm \mathcal{O}}(\Delta).
\end{equation*}

The quantities independent of $\sigma$ can be rearranged to finally obtain,
\begin{multline*}
\frac{1}{6}\Big(\partial^3_t q^\alpha+\sum_{\beta,\gamma,\xi,j} v_j^\alpha~ v_j^\beta~  v_j^\gamma~\vj^{\xi}~ \partial^3_{\beta\gamma\xi}\fjeq\Big)-\frac{1}{4}\sum_{\beta,\gamma,j} v_j^\alpha ~v_j^\beta~  v_j^\gamma ~\partial^2_{\beta\gamma}\big(\dtjf\big)=
 \\
 -\frac{1}{6}\sum_{\beta,j}~\vj^{\alpha}\vj^{\beta}\partial_{\beta}\big(\dtjfc\big)+\frac{1}{12}\sum_{\beta,\gamma,j} v_j^\alpha ~v_j^\beta~  v_j^\gamma~\partial^2_{\beta\gamma}\big(\dtjf\big).
\end{multline*}
That closes the proof.
\end{appendix}

\bibliographystyle{plain}
\bibliography{Bibliographie}

\end{document}